\newcommand{\norm}[1]{\| #1 \|}
\begin{document}

% alternative title: Analysis of the discontinuous Galerkin method for
%                    elliptic problems on surfaces
\title{Analysis of the discontinuous Galerkin method for elliptic problems on surfaces}
% Short title for running heads:
%\shorttitle{DG methods on surfaces}

\author{%
{\sc
Andreas Dedner,
Pravin Madhavan
{\sc and}
Bj\"orn Stinner} \\[2pt]
Mathematics Institute and Centre for Scientific Computing, University of Warwick,\\
Coventry CV4 7AL, UK
}
% Short list of authors for running heads:
%\shortauthorlist{Dedner \emph{et al.}}

\maketitle

\begin{abstract}
% Body of abstract:
{
We extend the discontinuous Galerkin (DG) framework to a linear
second-order elliptic problem on a compact smooth connected and oriented
surface in $\mathbb{R}^{3}$. An interior penalty (IP) method is introduced
on a discrete surface and we derive a-priori error estimates by relating
the latter to the original surface via the lift introduced in
\cite{dziuk1988finite}. The estimates suggest that the geometric error
terms arising from the surface discretisation do not affect the overall
convergence rate of the IP method when using linear ansatz functions. 
This is then verified numerically for a number of test problems.
An intricate issue is the approximation of the surface conormal required in the IP formulation, choices of which are investigated numerically. Furthermore, we present a generic implementation of test problems on surfaces.
}
% Keywords:
{discontinuous galerkin; interior penalty; surface partial differential equations; error analysis.}
\end{abstract}

\section{Introduction}
\label{sec:Introduction}
Partial differential equations (PDEs) on manifolds have become an active area of research in recent years due to the fact that, in many applications, models have to be formulated not on a flat Euclidean domain but on a curved surface. For example, they arise naturally in fluid dynamics (e.g.~surface active agents on the interface between two fluids, \cite{JamLow04}) and material science (e.g.~diffusion of species along grain boundaries, \cite{DecEllSty01}) but have also emerged in areas as diverse as image processing and cell biology (e.g.~cell motility involving processes on the cell membrane, \cite{neilson2011modelling} or phase separation on biomembranes, \cite{EllSti10}). 
%Exact solutions for such problems are rare, so we need to resort to numerical methods as an approximation. 

Finite element methods (FEM) for elliptic problems and their error analysis have been successfully applied to problems on surfaces via the intrinsic approach in \cite{dziuk1988finite} based on interpolating the surface by a triangulated one. This approach has subsequently been extended to parabolic problems in \cite{dziuk2007surface} as well as evolving surfaces in \cite{dziuk2007finite}. \cite{ju2009finite} and \cite{GieMue_prep} have also considered finite volume methods on surfaces via the intrinsic approach.   %, \cite{elliott2009analysis} and \cite{elliott2010numerical}. 
However, as in the planar case there are a number of situations where FEM may not be the appropriate numerical method, for instance, advection dominated problems which lead to steep gradients or even discontinuities in the solution. 
%There has been comparatively little done to investigate alternative numerical methods that solve such issues on surfaces, namely discontinuous Galerkin (DG) methods. 

DG methods are a class of numerical methods that have been successfully applied to hyperbolic, elliptic and parabolic PDEs arising from a wide range of applications. Some of its main advantages compared to `standard' finite element methods include the ability of capturing discontinuities as arising in advection dominated problems, and less restriction on grid structure and refinement as well as on the choice of basis functions. 
%$\bullet$ Easily parallelisable.
The main idea of DG methods is not to require continuity of the solution between elements. Instead, inter-element behaviour has to be prescribed carefully in such a way that the resulting scheme has adequate consistency, stability and accuracy properties. A short introduction to DG methods for both ODEs and PDEs is given in \cite{cockburn2003discontinuous}. A history of the development of DG methods can be found in \cite{cockburn2000development} and \cite{arnold2002unified}. \cite{arnold2002unified} provides an in-depth analysis of a large class of discontinuous Galerkin methods for second-order elliptic problems.

The motivation of this study has been to investigate the issues arising when attempting to apply DG methods to problems on surfaces. We restrict our analysis to a linear second-order elliptic PDE on a compact smooth connected and oriented surface. We expect that parabolic problems on evolving surfaces as featured in the above mentioned applications can be dealt with along the lines of \cite{dziuk2007finite}. 

This paper is organised in the following way. We consider a second-order elliptic equation on a compact smooth connected and oriented surface $\Gamma \subset \mathbb{R}^{3}$ and introduce a particular DG method known as the interior penalty (IP) method on a triangulated surface $\Gamma_{h}$. The surface IP method we consider is similar in nature to the one introduced in \cite{arnold1982interior}, and its well-posedness follows naturally from results in the planar case given in \cite{arnold2002unified} and \cite{ainsworth2011constant}. We then derive a-priori error estimates in the appropriate norms by relating $\Gamma_{h}$ to $\Gamma$ via a lifting operator and by making use of results from \cite{dziuk1988finite} and \cite{GieMue_prep} to show that the additional geometric error terms arising when approximating the surface scale in such a way that they do not affect the convergence rates proved and observed for the standard FEM approach in \cite{dziuk1988finite} when using linear ansatz functions. 
     
We then present some numerical results, making use of the Distributed and Unified Numerics Environment (DUNE) software package (see \cite{dunegridpaperII:08}, \cite{dunegridpaperI:08}) and, in particular, the DUNE-FEM module described in \cite{dunefempaper:10} (also see dune.mathematik.uni-freiburg.de for more details on this module). We consider a number of test problems, for which we compute experimental orders of convergence (EOCs) in both the $L^{2}$ norm and the $DG$ norm, and show that these coincide with the theoretical error estimates derived in the previous section. Furthermore, we consider several intuitive ways of approximating the surface conormal in our IP formulation, and investigate the resulting schemes numerically. In the process, we present a generic implementation of test problems on surfaces which follows as a direct application of the \cite{demlow2008adaptive} algorithms. 

Finally, we briefly present numerical results for nonconforming grids and higher order polynomial ansatz functions, which suggest that the convergence rates of the standard FEM approach still hold for such generalisations. 

\section{Notation and Setting}
\label{sec:NotationAndSetting}
The notation in this section closely follows the one used in \cite{dziuk1988finite}.
Let $\Gamma$ be a compact smooth connected and oriented surface in $\mathbb{R}^{3}$. For simplicity, we assume that $\partial \Gamma = \emptyset$. Let $d$ denote the signed distance function to $\Gamma$ which we assume to be well-defined in a sufficiently thin open tube $U$ around $\Gamma$. The orientation of $\Gamma$ is set by taking the normal $\nu$ of $\Gamma$ to be in the direction of increasing $d$ whence
\[\nu(\xi) = \nabla d(\xi),\ \xi \in \Gamma. \]
With a slight abuse of notation we also denote the projection to $\Gamma$ by $\xi$, i.e.~$\xi:U \rightarrow \Gamma$ is given by
\begin{equation}\label{eq:uniquePoint}
\xi(x) = x - d(x)\nu(x) \quad \mbox{where } \nu(x):=\nu(\xi(x)).
\end{equation}
Later on, we will consider a triangulated surface $\Gamma_h \subset U$ approximating $\Gamma$ such that there is a one-to-one relation between points $x \in \Gamma_h$ and $\xi \in \Gamma$ so that, in particular, the above relation (\ref{eq:uniquePoint}) can be inverted. Throughout this paper, we denote by
\[ P(\xi):= I - \nu(\xi)\otimes \nu(\xi),\ \xi\in \Gamma, \]
the projection onto the tangent space $T_{\xi}\Gamma$ on $\Gamma$ at a point $\xi \in \Gamma$. Here $\otimes$ denotes the usual tensor product.
\begin{definition}
For any function $\eta$ defined on an open subset of $U$ containing $\Gamma$ we can define its \emph{tangential gradient} on $\Gamma$ by
\[ \nabla_{\Gamma}\eta := \nabla \eta - \left(\nabla \eta\cdot \nu \right) \nu = P\nabla \eta\]
and then the \emph{Laplace-Beltrami} operator on $\Gamma$ by 
\[ \Delta_{\Gamma} \eta := \nabla_{\Gamma}\cdot (\nabla_{\Gamma} \eta).% = \sum_{i=1}^{3} D_{i}D_{i} \eta. 
\]
\end{definition}
%It can be shown that the tangential gradient $\nabla_{\Gamma} \eta$ only depends  on the values of $\eta$ restricted to $\Gamma$. 
%The components of the tangential derivative will be denoted by
%\[\nabla_{\Gamma} \eta := (D_{1}\eta,D_{2}\eta,D_{3}\eta).\]
%\begin{definition}
%\end{definition}
\begin{definition}
We define the surface Sobolev spaces
\[ H^{m}(\Gamma) := \{u \in L^{2}(\Gamma) \ : \ D^{\alpha}u \in L^{2}(\Gamma)\ \forall |\alpha| \leq m \}, \quad m \in \mathbb{N} \cup \{ 0 \}, \]
with corresponding Sobolev seminorm and norm respectively given by
\[ 
|u|_{H^{m}(\Gamma)} := \left(\sum_{|\alpha|=m} \norm{D^{\alpha}u}_{L^{2}(\Gamma)}^{2}\right)^{1/2}, \quad 
\norm{u}_{H^{m}(\Gamma)} := \left(\sum_{k=0}^m |u|_{H^{k}(\Gamma)}^{2}\right)^{1/2}.
\]
\end{definition}     
We refer to \cite{wlokapartial} for a proper discussion of Sobolev spaces on manifolds. 

The problem that we consider in this paper is the following second-order elliptic equation:
\begin{equation}\label{eq:EllipticGamma}
-\Delta_{\Gamma} u + u = f
\end{equation}
for a given $f \in L^{2}(\Gamma)$. Using integration by parts on surfaces the weak problem reads:
\\
\\
$\big(\mathbf{P}_{\Gamma}\big)$ Find $u \in H^{1}(\Gamma)$ such that  
\begin{equation} \label{eq:weakH1}
 \int_\Gamma \nabla_{\Gamma}u\cdot \nabla_{\Gamma}v + u v\ dA =
   \int_\Gamma fv\ dA \quad\forall v\in H^1(\Gamma). 
\end{equation}
Existence and uniqueness of a solution $u$ follows from standard arguments. We assume that $u \in H^{2}(\Gamma)$ satisfies 
\begin{align}\label{eq:EllipticRegularity}
\norm{u}_{H^{2}(\Gamma)} \leq C\norm{f}_{L^{2}(\Gamma)}
\end{align}
where we refer to \cite{aubin1982nonlinear} and \cite{wlokapartial} for more details on elliptic regularity on surfaces. 

\section{Approximation and Properties} \label{sec:ApproximationAndProperties}
To obtain a discretisation of $u$, the smooth surface $\Gamma$ is approximated by a polyhedral surface $\Gamma_{h} \subset U$ composed of planar triangles. Let $\mathcal{T}_{h}$ be the associated regular, conforming triangulation of $\Gamma_{h}$ i.e.
\[ \Gamma_{h} = \bigcup_{K_{h} \in \mathcal{T}_{h}} K_{h}. \]
The vertices are taken to sit on $\Gamma$ so that $\Gamma_{h}$ is its linear interpolation. We assume that the projection map $\xi$ defined in (\ref{eq:uniquePoint}) is a bijection when restricted to $\Gamma_h$, thus avoiding multiple coverings of $\Gamma$ by $\Gamma_h$. 
Let $\mathcal{E}_{h}$ denote the set of all codimension one intersections of elements $K_h^{+},K_h^{-}\in \mathcal{T}_{h}$ (i.e., the edges). We define the conormal $n_h^{+}$ on such an intersection $e_{h} \in \mathcal{E}_{h}$ of elements $K_h^{+}$ and $K_h^{-}$ by demanding that \\[1mm]
$\bullet$ $n_h^{+}$ is a unit vector,\\
$\bullet$ $n_h^{+}$ is tangential to (the planar triangle) $K_h^{+}$, \\
$\bullet$ in each point $x \in e_h$ we have that $n_h^{+} \cdot (y-x) \leq 0$ for all $y \in K_h^{+}$. \\[1mm]
Analogously one can define the conormal $n_h^{-}$ on $e_h$ by exchanging $K_h^{+}$ with $K_h^{-}$. A discrete DG space associated with $\Gamma_{h}$ is given by
\[V_{h} := \{v_{h} \in L^{2}(\Gamma_{h})\ : \ \left. v_{h} \right |_{K_{h}} \in P^{1}(K_{h})\ \forall K_{h}\in \mathcal{T}_{h}\}\]
i.e. the space of piecewise linear functions which are globally in $L^{2}(\Gamma_{h})$. 
For $v_h\in V_{h}$, let 
\[
v_h^{+/-}:= v_h \big{|}_{\partial K_h^{+/-}}.
\]
We can now define a discrete DG formulation on $\Gamma_{h}$ for a given function $f_{h} \in L^{2}(\Gamma_{h})$ (note that, in general, this is not a finite element function, it will be related to the function $f$ given in problem $(\mathbf{P}_{\Gamma})$ later on, see (\ref{eq:rel_f_fh}) below):
\\
\\
$\left(\mathbf{P}_{\Gamma_{h}}^{IP}\right)$ Find $u_{h} \in V_{h}$  such that
\begin{equation} \label{eq:InteriorPenaltyGammah}
a_{\Gamma_{h}}^{IP}(u_{h},v_{h}) = \sum_{K_{h} \in \mathcal{T}_{h}}\int_{K_{h}}f_{h} v_{h}\ dA_{h}\ \forall v_{h} \in V_{h}
\end{equation}
where
\begin{align}\label{eq:InteriorPenaltyGammahForm}
a_{\Gamma_{h}}^{IP}(u_{h},v_{h}&) := \sum_{K_{h} \in \mathcal{T}_{h}}\int_{K_{h}}\nabla_{\Gamma_{h}}u_{h}\cdot \nabla_{\Gamma_{h}}v_{h} + u_{h} v_{h}\ dA_{h} \notag \\ 
&- \sum_{e_{h} \in \mathcal{E}_{h}}\int_{e_{h}}(u_{h}^{+}-u_{h}^{-})\frac{1}{2}(\nabla_{\Gamma_{h}}v_{h}^{+}\cdot n_{h}^{+} - \nabla_{\Gamma_{h}}v_{h}^{-}\cdot n_{h}^{-}) + (v_{h}^{+}-v_{h}^{-})\frac{1}{2}(\nabla_{\Gamma_{h}}u_{h}^{+}\cdot n_{h}^{+} - \nabla_{\Gamma_{h}}u_{h}^{-}\cdot n_{h}^{-})\ ds_{h} \notag \\
&+ \sum_{e_{h} \in \mathcal{E}_{h}}\int_{e_{h}}\beta_{e_{h}}(u_{h}^{+}-u_{h}^{-})(v_{h}^{+}-v_{h}^{-})\ ds_{h}.
\end{align}
The penalty parameters
$\beta_{e_{h}}$ are given by $\beta_{e_{h}} = \omega_{e_{h}}h_{e_{h}}^{-1}$ where $h_{e_{h}}$ is some length scale
associated with the intersection $e_{h}$ (for instance, the edge length). The interior penalty parameters $\omega_{e_{h}}$ are uniformly bounded with respect to $h := \max_{e_{h} \in \mathcal{E}_{h}} h_{e_{h}}$. 
\begin{remark}
This formulation corresponds to the one found in \cite{arnold2002unified} in the case when the domain is flat and is similar in nature to the original formulation of the IP method found in \cite{arnold1982interior} for which the conormals $n_{h}^{+/-}$ are associated with their respective gradient terms rather than the scalar terms. It is important to point out that this formulation is not equivalent to using the formulation found in \cite{arnold2002unified} on $\Gamma_{h}$. We will discuss this issue further in Section~\ref{sec:numerics}. 
\end{remark}

We now define a norm on the space of piecewise smooth functions: 
\begin{definition}\label{def:DGNorm}
For $u_{h} \in V_{h}$ we define
\[ |u_{h}|_{1,h}^{2} := \sum_{K_{h} \in \mathcal{T}_{h}} \norm{u_{h}}_{H^{1}(K_{h})}^{2}\    \ , \    \ |u|_{*,h}^{2} := \sum_{e_{h} \in \mathcal{E}_{h}} h_{e_{h}}^{-1}  \norm{u_{h}^{+} - u_{h}^{-}}_{L^{2}(e_{h})}^{2}.\]
The DG norm is given by
\[ \norm{u_{h}}_{DG}^{2} := |u_{h}|_{1,h}^{2} + |u_{h}|_{*,h}^{2}. \]
\end{definition}

\begin{lemma}
Let $\mathcal{E}_{K_{h}}$ denote the set containing the individual edges of element $K_{h}$. Then if $\beta_{e_{h}} = \omega_{e_{h}}h_{e_{h}}^{-1}$ with
\begin{align}\label{penaltyParameterBound} 
\omega_{e_{h}} > \max_{\substack{K_{h} \in \mathcal{T}_{h}: \\ e_{h} \subset \partial K_{h}}}
  \frac{1}{2}\sum_{\tilde{e_{h}} \in \mathcal{E}_{K_{h}}}\frac{|\tilde{e_{h}}|^{2}}{|K_{h}|}\ \mbox{for all} \ \ e_{h} \in \mathcal{E}_{h},
\end{align}
then $a_{\Gamma_h}^{IP}$ is stable and bounded. Hence there is a unique solution $u_{h} \in V_{h}$ of $(\mathbf{P}_{\Gamma_{h}}^{IP})$ which satisfies
\begin{equation}\label{eq:stabilityEstimateGamma1}
\norm{u_{h}}_{DG} \leq C \norm{f_{h}}_{L^{2}(\Gamma_{h})}~.
\end{equation}
\end{lemma}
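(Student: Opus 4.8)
The plan is to establish the two ingredients—coercivity (stability) and continuity (boundedness) of $a_{\Gamma_h}^{IP}$ on $V_h$ with respect to the DG norm—and then invoke Lax--Milgram to conclude existence, uniqueness, and the stability bound \eqref{eq:stabilityEstimateGamma1}. Because each triangle $K_h$ is planar and $V_h$ consists of piecewise linear functions, the analysis is essentially the flat-domain interior penalty analysis applied triangle-by-triangle, so I expect to be able to mirror the arguments of \cite{arnold2002unified} and \cite{ainsworth2011constant} with only cosmetic changes (the tangential gradient $\nabla_{\Gamma_h}$ on a planar triangle is just the ordinary gradient in the plane of that triangle).

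First I would treat continuity. Writing $a_{\Gamma_h}^{IP}(u_h,v_h)$ as the sum of the volume term, the symmetric consistency term, and the penalty term, I would bound each by $\norm{u_h}_{DG}\norm{v_h}_{DG}$ using Cauchy--Schwarz. The volume term and the penalty term are immediate from the definition of the DG norm. The only term requiring care is the consistency term: after Cauchy--Schwarz on each edge $e_h$ I would control $\norm{\nabla_{\Gamma_h}v_h^{\pm}\cdot n_h^{\pm}}_{L^2(e_h)}$ by the element $H^1$-seminorm using a discrete trace (inverse) inequality of the form $h_{e_h}\norm{\nabla_{\Gamma_h}v_h}_{L^2(e_h)}^2 \leq C\norm{\nabla_{\Gamma_h}v_h}_{L^2(K_h)}^2$, which holds on each planar triangle with a constant depending only on shape regularity. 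The factor $h_{e_h}^{-1/2}$ appearing on the jump terms then matches the $h_{e_h}^{-1}$ weight in $|u|_{*,h}^2$, giving the desired product bound.

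Next I would prove coercivity, which is where the hypothesis \eqref{penaltyParameterBound} enters and which I expect to be the main obstacle. Setting $v_h=u_h$, the volume term gives $|u_h|_{1,h}^2$ and the penalty term gives $\sum_{e_h}\beta_{e_h}\norm{u_h^+-u_h^-}_{L^2(e_h)}^2 = |u_h|_{*,h}^2$ (up to the $\omega_{e_h}$ weights), while the symmetric consistency term is the dangerous indefinite contribution. The strategy is to absorb this cross term: on each edge I would use Cauchy--Schwarz with a free parameter $\varepsilon$, then the same discrete trace inequality as above, to bound the consistency term by $\tfrac{\varepsilon}{2}|u_h|_{1,h}^2$ plus a multiple of $|u_h|_{*,h}^2$. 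The delicate point is to track the constants explicitly so that the geometric quantity $\tfrac{1}{2}\sum_{\tilde e_h\in\mathcal{E}_{K_h}}|\tilde e_h|^2/|K_h|$ emerges as the sharp threshold: the condition \eqref{penaltyParameterBound} ensures that what remains of the penalty term after this absorption still dominates, yielding $a_{\Gamma_h}^{IP}(u_h,u_h)\geq c\norm{u_h}_{DG}^2$ with $c>0$. This sharp trace constant on a planar triangle is precisely the content of \cite{ainsworth2011constant}, which I would cite rather than rederive.

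Finally, with coercivity constant $c$ and continuity constant $C$ in hand, the bilinear form is an inner-product-equivalent form on the finite-dimensional space $V_h$, so Lax--Milgram (or simply the invertibility of a coercive form on a finite-dimensional space) yields a unique $u_h\in V_h$ solving $(\mathbf{P}_{\Gamma_h}^{IP})$. The stability estimate then follows by testing with $v_h=u_h$: coercivity gives $c\norm{u_h}_{DG}^2\leq a_{\Gamma_h}^{IP}(u_h,u_h) = \sum_{K_h}\int_{K_h}f_h u_h\,dA_h \leq \norm{f_h}_{L^2(\Gamma_h)}\norm{u_h}_{L^2(\Gamma_h)}\leq \norm{f_h}_{L^2(\Gamma_h)}\norm{u_h}_{DG}$, and dividing through by $\norm{u_h}_{DG}$ gives \eqref{eq:stabilityEstimateGamma1} with $C=1/c$.
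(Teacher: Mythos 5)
Your proposal is correct and takes essentially the same route as the paper: the paper's proof simply observes that, since $\Gamma_h$ is composed of planar triangles, the classical flat-domain IP boundedness and stability arguments of \cite{arnold2002unified} together with the sharp penalty threshold of Lemma 2.1 in \cite{ainsworth2011constant} apply verbatim, and then concludes via Lax--Milgram exactly as you do. The only difference is presentational---you spell out the Cauchy--Schwarz/trace-inequality absorption argument that the paper delegates entirely to those two citations.
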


\begin{proof}
Boundedness and stability of (\ref{eq:InteriorPenaltyGammahForm}) follow in a similar way as for the classical IP method (see \cite{arnold2002unified} for more details) since all the arguments apply to $\Gamma_{h}$. For the lower bound of the penalty parameters, the proof of Lemma 2.1 in \cite{ainsworth2011constant} applies straightforwardly to the surface $\Gamma_{h}$. Note that the reason why these results naturally extend onto $\Gamma_{h}$ is that the latter is composed of planar triangles. By Lax-Milgram, the uniqueness property follows.
\end{proof}

Our goal now is to compare the solution $u \in H^2(\Gamma)$ of $(\mathbf{P}_{\Gamma})$ with the solution $u_{h} \in V_{h}$ of $(\mathbf{P}_{\Gamma_{h}}^{IP})$ but these functions are defined on different domains. The approach suggested in \cite{dziuk1988finite} is to lift functions defined on the discrete surface $\Gamma_{h}$ onto the smooth surface $\Gamma$. 
\begin{definition}
For any function $w$ defined on $\Gamma_{h}$ we define the \emph{lift} onto $\Gamma$ by
\[ w^{l}(\xi) := w(x(\xi)),\ \xi \in \Gamma, \]
where by (\ref{eq:uniquePoint}) and the non-overlapping of the triangular elements, $x(\xi)$ is defined as the unique solution of
\[ x = \xi + d(x)\nu(\xi).\]
\end{definition}
Extending $w^{l}$ constantly along the lines $s \mapsto \xi + s\nu(\xi)$ we obtain a function defined on $U$. In particular, we 
\begin{equation} \label{eq:rel_f_fh}
\mbox{define }f_{h} \mbox{ such that } f_{h}^{l} = f \mbox{ on } \Gamma.
\end{equation}
By (\ref{eq:uniquePoint}), for every $K_{h} \in \mathcal{T}_{h}$, there is a unique curved triangle $K_{h}^{l} := \xi(K_{h}) \subset \Gamma$. Note that we assumed $\xi(x)$ is a bijection so multiple coverings are not permitted. We now define the regular, conforming triangulation $\mathcal{T}_{h}^{l}$ of $\Gamma$ such that
\[ \Gamma = \bigcup_{K_{h}^{l} \in \mathcal{T}_{h}^{l}} K_{h}^{l}. \] 
The triangulation $\mathcal{T}_{h}^{l}$ of $\Gamma$ is thus induced by the triangulation $\mathcal{T}_{h}$ of $\Gamma_{h}$ via the lift. Similarly, $e_{h}^{l}:=\xi(e_{h}) \in \mathcal{E}_{h}^{l}$ are the unique curved edges.

The appropriate function space for lifted functions is given by
\[V_{h}^{l} := \{v_{h}^{l} \in L^{2}(\Gamma)\ : \ v_{h}^{l}(\xi) = v_{h}(x(\xi))\ \mbox{with some}\ v_{h} \in V_{h}\}.\]
Note that the DG norm for functions $u_{h}^{l} \in V_{h}^{l}$ is the same one as in Definition \ref{def:DGNorm} but with the triangulation $\mathcal{T}_{h}^{l}$ instead and corresponding length scale $h_{e_{h}^{l}}$ associated with $e_{h}^{l}$. The context of its use makes it clear which DG norm we are dealing with. Furthermore, we observe that 
\begin{equation} \label{eq:ineq_length_edges}
h_{e_{h}}^{l} \geq h_{e_h}
\end{equation}
since the deformation of the straight edges can only increase their length. We now prove some geometric error estimates relating $\Gamma$ to $\Gamma_{h}$.
\begin{lemma} \label{Gamma2GammahSmall}
Let $\Gamma$ be a compact smooth connected and oriented surface in $\mathbb{R}^{3}$ and $\Gamma_{h}$ its linear interpolation with outward unit normal $\nu_{h}$. Let $H = \nabla^{2}d$ and $P_{h} = I - \nu_{h} \otimes \nu_{h}$. Furthermore, we denote by $\delta_{h}$ the local area deformation when transforming $K_{h}$ to $K_{h}^{l}$ i.e. $\delta_{h}dA_{h} = dA$ and $\delta_{e_{h}}$ the local edge deformation when transforming $e_{h}$ to $e_{h}^{l}$ i.e. $\delta_{e_{h}}ds_{h} = ds$. Then we have
\begin{align*}
\norm{d}&_{L^{\infty}(\Gamma)} \leq Ch^{2},\ \norm{1-\delta_{h}}_{L^{\infty}(\Gamma)} \leq Ch^{2},\ \norm{\nu-\nu_{h}}_{L^{\infty}(\Gamma)} \leq Ch,\ \norm{P-R_{h}}_{L^{\infty}(\Gamma)} \leq Ch^{2}\\ &\norm{1-\delta_{e_{h}}}_{L^{\infty}(\Gamma)} \leq Ch^{2},\ \norm{P-R_{e_{h}}}_{L^{\infty}(\Gamma)} \leq Ch^{2}\ \mbox{and}\ \norm{n-P n_{h}^{l}}_{L^{\infty}(e_{h}^{l})} \leq C h^{2}
\end{align*}
where $R_{h} :=  \frac{1}{\delta_{h}}P(I-d H)P_{h}(I-d H)$ and $R_{e_{h}} :=  \frac{1}{\delta_{e_{h}}}P(I-d H)P_{h}(I-d H)$.
\end{lemma}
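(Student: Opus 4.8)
The plan is to reduce all seven bounds to interpolation estimates for the distance function $d$ together with a single computation of the tangential Jacobian of the projection $\xi$. I would first record the two driving facts. Writing $I_h$ for the piecewise linear interpolant on $\mathcal{T}_h$, the vertices of each $K_h$ sit on $\Gamma$, so $d$ vanishes there and $I_h d\equiv 0$ on $K_h$; since $d\in C^2(U)$ with constants depending only on $\Gamma$, the standard interpolation estimates give $\norm{d}_{L^\infty(K_h)}=\norm{d-I_h d}_{L^\infty(K_h)}\le Ch^2$ and $\norm{\nabla_{\Gamma_h} d}_{L^\infty(K_h)}\le Ch$. The first is precisely the bound on $\norm{d}_{L^\infty(\Gamma)}$, and since $\nabla_{\Gamma_h}d=P_h\nu=\nu-(\nu\cdot\nu_h)\nu_h$ the second yields $\norm{\nu-\nu_h}_{L^\infty(\Gamma)}\le Ch$ once the orientation of $\nu_h$ is fixed. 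The crucial consequence is the cancellation $1-\nu\cdot\nu_h=\frac{1}{2}|\nu-\nu_h|^2\le Ch^2$, which is what upgrades the $O(h)$ normal error to $O(h^2)$ in every metric quantity below.

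Next I would differentiate $\xi(x)=x-d(x)\nu(x)$. Using $\nabla d=\nu$, $\nabla\nu=H$ and $H\nu=0$ (so that $PH=HP=H$), the Jacobian collapses to the symmetric matrix $D\xi=P(I-dH)=P-dH$. Restricting $D\xi$ to the tangent plane $\mathrm{range}(P_h)$ of $K_h$ gives the area map defining $\delta_h$; at $d=0$ its determinant is the cosine $\nu\cdot\nu_h$ between the two planes, and the $-dH$ term is an $O(\norm{d}_{L^\infty})=O(h^2)$ perturbation, so $\norm{1-\delta_h}_{L^\infty(\Gamma)}\le Ch^2$. The same differentiation yields the gradient identity $\nabla_{\Gamma_h}v_h=P_h(I-dH)\nabla_\Gamma v_h^l$, so that the element gradient form transforms with the matrix $R_h$ acting on tangential fields; expanding $R_h$ about its leading term $PP_hP=P-(P\nu_h)\otimes(P\nu_h)$ and invoking $|P\nu_h|^2=1-(\nu\cdot\nu_h)^2\le Ch^2$ together with the $O(h^2)$ contributions of $d$ and $1-\delta_h$ gives $\norm{P-R_h}_{L^\infty(\Gamma)}\le Ch^2$. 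The edge estimates have the identical structure: restricting $D\xi$ to the unit edge tangent $t_h$ and using $\nu\cdot t_h=(\nu-\nu_h)\cdot t_h=O(h)$ (as $\nu_h\perp t_h$) gives $\delta_{e_h}=|(P-dH)t_h|=1+O(h^2)$ and, by the same expansion, $\norm{P-R_{e_h}}_{L^\infty(\Gamma)}\le Ch^2$.

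The conormal estimate $\norm{n-Pn_h^l}_{L^\infty(e_h^l)}\le Ch^2$ is the delicate point and I expect it to be the main obstacle, since $Pn_h^l$ and $n$ each differ from the planar conormal $n_h$ already at order $h$, so the bound demands that these $O(h)$ deviations cancel. I would work inside the two-dimensional tangent space $T\Gamma$: by construction $Pn_h^l$ lies in $T\Gamma$, and I would verify that it is orthogonal to the transported edge tangent $t\parallel(P-dH)t_h$ up to $O(h^2)$ --- the cross term collapses to $-(\nu\cdot t_h)(\nu\cdot n_h)$, a product of two $O(h)$ factors --- and that $|Pn_h^l|^2=1-(\nu\cdot n_h)^2=1+O(h^2)$. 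Since $n$ is the unique correctly oriented unit vector of $T\Gamma$ orthogonal to $t$, a tangential vector that is orthogonal to $t$ to second order and of unit length to second order must coincide with $n$ up to $O(h^2)$, the orientation matching for small $h$ by continuity. Assembling the seven bounds completes the proof.
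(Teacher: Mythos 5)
Your proposal is correct, but it takes a genuinely different route from the paper, which does not prove these estimates from scratch at all: the paper cites \cite{dziuk1988finite} for the first four bounds and \cite{GieMue_prep} for the edge-deformation bounds together with the two scalar estimates $|(\tau,n_h^l)|\leq Ch^2$ and $|1-(n,n_h^l)|\leq Ch^2$, and the only argument actually written out is your final step --- decomposing $Pn_h^l=(\tau,Pn_h^l)\tau+(n,Pn_h^l)n$ in the tangent plane and reading off $\norm{n-Pn_h^l}_{L^\infty(e_h^l)}\leq Ch^2$ from those two scalars. You instead reconstruct the content of both references from first principles: interpolation of $d$ (exploiting $I_hd\equiv 0$ since the vertices lie on $\Gamma$), the Jacobian identity $D\xi=P(I-dH)=P-dH$, and the single cancellation mechanism $1-\nu\cdot\nu_h=\frac{1}{2}|\nu-\nu_h|^2\leq Ch^2$ that upgrades $O(h)$ normal errors to $O(h^2)$ in every metric quantity; in particular your cross-term computation $Pn_h\cdot Pt_h=-(\nu\cdot t_h)(\nu\cdot n_h)=O(h^2)$, valid because $t_h\perp\nu_h$ and $n_h\perp\nu_h$ make both factors controlled by $|\nu-\nu_h|$, re-derives precisely the estimates imported from \cite{GieMue_prep}. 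The paper's route buys brevity and reliance on established results; yours buys a self-contained, mechanism-revealing proof, at the cost of two points that deserve tightening. First, $\norm{P-R_h}_{L^\infty}\leq Ch^2$ holds only when $R_h$ acts on tangent vectors (equivalently with a trailing $P$): since $(I-dH)\nu=\nu$, one has $R_h\nu=-(\nu\cdot\nu_h)\,\delta_h^{-1}P(I-dH)\nu_h$, which is merely $O(h)$, so the full matrix-norm statement is false as literally written --- your reading of $R_h$ as acting on tangential fields matches the intended use, and the paper's own statement shares the same looseness. Second, the orientation step $n\cdot n_h>0$, which you dispatch ``by continuity,'' is exactly where the cited bound $|1-(n,n_h^l)|\leq Ch^2$ carries content beyond $|(n,n_h^l)|=1+O(h^2)$; a quantitative justification (e.g., via the outward-pointing characterisation of the conormals together with the $O(h^2)$ closeness of $K_h$ and $K_h^l$, using shape regularity) should be supplied to make the argument complete.
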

\begin{proof}
All these geometric estimates follow from standard interpolation theory via the linear interpolation of $\Gamma$. Proofs of the first four estimates can be found in \cite{dziuk1988finite}, the fifth (and thus sixth) one in \cite{GieMue_prep}. The last estimate is a corollary of another result in \cite{GieMue_prep}, which states that if $n$ and $n_{h}^{l}$ are given as before and $\tau$ denotes a unit tangent vector on some $e_{h}^{l} \in \mathcal{E}_{h}^{l}$, we have
\[|(\tau , n_{h}^{l})| \leq C h^{2}, |1 - (n , n_{h}^{l})| \leq C h^{2}.\]
Writing $Pn_{h}^{l} = (\tau,Pn_{h}^{l})\tau + (n,Pn_{h}^{l})n$, we deduce that indeed
\begin{align*}
\norm{n-P n_{h}^{l}}_{L^{\infty}(e_{h}^{l})} &= \norm{n - (\tau,Pn_{h}^{l})\tau - (n,Pn_{h}^{l})n}_{L^{\infty}(e_{h}^{l})} \\ 
&\leq |1 - (n,Pn_{h}^{l})| + |(\tau,Pn_{h}^{l})| = |1 - (n,n_{h}^{l})| + |(\tau,n_{h}^{l})| = O(h^{2}).
\end{align*}
%where we have made use of the previous geometric estimates. By Lemma \questionP{?} in \cite{GieMue_prep} the last estimate follows. 
\end{proof} 

\begin{lemma}
Let $u_{h} \in V_{h}$ satisfy (\ref{eq:stabilityEstimateGamma1}). Then $u_{h}^{l} \in V_{h}^{l}$ satisfies
\begin{equation}\label{eq:stabilityEstimateGamma2}
\norm{u_{h}^{l}}_{DG} \leq C \norm{f}_{L^{2}(\Gamma)}
\end{equation}
for sufficiently small $h$.
\end{lemma}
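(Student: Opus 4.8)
The plan is to compare the DG norm of the lift $u_h^l$, which is built from integrals over the curved elements $K_h^l$ and edges $e_h^l$, term by term with the DG norm of $u_h$ on the polyhedral surface, and then to chain the resulting estimate with the stability bound (\ref{eq:stabilityEstimateGamma1}) and the defining relation (\ref{eq:rel_f_fh}) between $f_h$ and $f$. Concretely I would establish
\[
\norm{u_h^l}_{DG}^2 \le (1 + Ch^2)\,\norm{u_h}_{DG}^2
\]
for $h$ small, whence $\norm{u_h^l}_{DG} \le C\norm{u_h}_{DG} \le C\norm{f_h}_{L^2(\Gamma_h)} \le C\norm{f}_{L^2(\Gamma)}$. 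Every comparison is carried out elementwise and edgewise via the change of variables $\xi = \xi(x)$, controlling the deformation factors through the geometric bounds of Lemma~\ref{Gamma2GammahSmall}.

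For the broken $H^1$ part $|u_h^l|_{1,h}^2 = \sum_{K_h^l}\norm{u_h^l}_{H^1(K_h^l)}^2$ I would pull each curved integral back to the flat triangle $K_h$ with $dA = \delta_h\,dA_h$. The $L^2$ contribution is immediate, since $u_h^l(\xi)=u_h(x(\xi))$ and $\norm{1-\delta_h}_{L^\infty(\Gamma)}\le Ch^2$. For the tangential-gradient contribution I would invoke the gradient identity of \cite{dziuk1988finite} relating $\nabla_\Gamma u_h^l$ and $\nabla_{\Gamma_h} u_h$, which rewrites $\int_{K_h^l}|\nabla_\Gamma u_h^l|^2\,dA$ as $\int_{K_h}\langle R_h\nabla_{\Gamma_h}u_h,\nabla_{\Gamma_h}u_h\rangle\,\delta_h\,dA_h$; the bounds $\norm{P-R_h}_{L^\infty(\Gamma)}\le Ch^2$ and $\norm{1-\delta_h}_{L^\infty(\Gamma)}\le Ch^2$ then let me replace the matrix $R_h$ by the projection $P$ up to an $O(h^2)$ relative perturbation.

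For the jump seminorm, the lift acts pointwise, so $(u_h^l)^{\pm} = u_h^{\pm}\circ\xi$ and the edge change of variables $ds = \delta_{e_h}\,ds_h$ gives $\norm{(u_h^l)^+-(u_h^l)^-}_{L^2(e_h^l)}^2 = \int_{e_h}(u_h^+-u_h^-)^2\delta_{e_h}\,ds_h$. Together with $\norm{1-\delta_{e_h}}_{L^\infty(\Gamma)}\le Ch^2$ and the length inequality (\ref{eq:ineq_length_edges}), which yields $h_{e_h^l}^{-1}\le h_{e_h}^{-1}$ in the right direction, this bounds $|u_h^l|_{*,h}^2$ by $(1+Ch^2)|u_h|_{*,h}^2$. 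The final $f_h$-to-$f$ step is the same area change of variables: from $f_h^l=f$ and $dA_h = \delta_h^{-1}\,dA$ one gets $\norm{f_h}_{L^2(\Gamma_h)}^2 = \int_\Gamma f^2 \delta_h^{-1}\,dA \le C\norm{f}_{L^2(\Gamma)}^2$.

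The main obstacle is the gradient term. Using only $\norm{\nu-\nu_h}_{L^\infty(\Gamma)}\le Ch$ one has $P\nabla_{\Gamma_h}u_h = \nabla_{\Gamma_h}u_h + O(h)$, an $O(h)$ defect that would destroy the rate if it appeared linearly. The point is that in the quadratic form it appears squared: since $\nu_h\cdot\nabla_{\Gamma_h}u_h=0$ on $K_h$, one has $\langle P\nabla_{\Gamma_h}u_h,\nabla_{\Gamma_h}u_h\rangle = |\nabla_{\Gamma_h}u_h|^2 - \big((\nu-\nu_h)\cdot\nabla_{\Gamma_h}u_h\big)^2 = \big(1+O(h^2)\big)|\nabla_{\Gamma_h}u_h|^2$. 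Tracking this cancellation carefully, and noting that the constants in Lemma~\ref{Gamma2GammahSmall} are uniform, is exactly what forces the hypothesis of sufficiently small $h$ and yields the clean $O(h^2)$ geometric perturbation in every term.
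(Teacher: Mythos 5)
Your proposal is correct and follows essentially the same route as the paper: the paper likewise proves $\norm{v_h}_{DG} \geq C\norm{v_h^l}_{DG}$ (the same estimate as yours, merely phrased as a lower bound on the discrete norm) by handling the broken $H^1$ part via the Dziuk norm equivalence through $R_h$, the jump seminorm via the edge deformation bound $\norm{1-\delta_{e_h}}_{L^\infty(\Gamma)} \leq Ch^2$ together with the length inequality (\ref{eq:ineq_length_edges}), and then chains this with (\ref{eq:stabilityEstimateGamma1}) and $\norm{f_h}_{L^2(\Gamma_h)} \leq C\norm{f_h^l}_{L^2(\Gamma)} = C\norm{f}_{L^2(\Gamma)}$. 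Your closing observation about the squared normal defect $\bigl((\nu-\nu_h)\cdot\nabla_{\Gamma_h}u_h\bigr)^2$ is a correct elementary explanation of why the $O(h)$ normal error does not degrade the rate, though in the paper this mechanism is already absorbed into the cited bound $\norm{P-R_h}_{L^\infty(\Gamma)} \leq Ch^2$ from Lemma~\ref{Gamma2GammahSmall}.
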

\begin{proof}
We first show that that for any function $v_h\in V_h$,  
\begin{align}\label{eq:normestimate}
\norm{v_h}_{DG} \geq C\norm{v_h^l}_{DG}.
\end{align}
The $|\cdot|_{1,h}^{2}$ component of the DG norm is dealt with in exactly the same way as in \cite{dziuk1988finite}. Similarly, making use of Lemma \ref{Gamma2GammahSmall} and the fact that $h_{e_{h}}^{-1} \geq h_{e_{h}^{l}}^{-1}$ (see (\ref{eq:ineq_length_edges})), we obtain the following for the $|\cdot|_{*,h}^{2}$ component of the DG norm:
\begin{align*} 
&\sum_{e_{h} \in \mathcal{E}_{h}}h_{e_{h}}^{-1}\int_{e_{h}} (v_{h}^{+} - v_{h}^{-})^{2}\ ds_{h} \geq \sum_{e_{h}^{l} \in \mathcal{E}_{h}^{l}}h_{e_{h}^{l}}^{-1}\int_{e_{h}^{l}} (v_{h}^{l+} - v_{h}^{l-})^{2}\frac{1}{\delta_{e_{h}}}\ ds \\
&= \sum_{e_{h}^{l} \in \mathcal{E}_{h}^{l}}h_{e_{h}^{l}}^{-1}\int_{e_{h}^{l}} (v_{h}^{l+} - v_{h}^{l-})^{2}\ ds + \sum_{e_{h}^{l} \in \mathcal{E}_{h}^{l}}h_{e_{h}^{l}}^{-1}\int_{e_{h}^{l}} (v_{h}^{l+} - v_{h}^{l-})^{2}\left(\frac{1}{\delta_{e_{h}}}-1\right)\ ds\\ 
&\geq \left(1-Ch^2 \right)\sum_{e_{h}^{l} \in \mathcal{E}_{h}^{l}}h_{e_{h}^{l}}^{-1}\int_{e_{h}^{l}} (v_{h}^{l+} - v_{h}^{l-})^{2}\ ds
\end{align*}  
which yields the desired estimate for sufficiently small $h$. Noting that $\norm{f_h}_{L^2(\Gamma_h)} \leq C\norm{f_h^l}_{L^2(\Gamma)} = C\norm{f}_{L^2(\Gamma)}$ (see \cite{dziuk1988finite}), we can extend the stability estimate (\ref{eq:stabilityEstimateGamma1}) to the lifted discrete function $u_h^l$ as required.  
\end{proof}

We now define a bilinear form on $\Gamma$ induced by $a_{\Gamma_h}^{IP}$ and the lifting operator:
\begin{align}\label{eq:InteriorPenaltyGammaForm}
a_{\Gamma}^{IP}(u,v) &:= 
  \sum_{K_{h}^{l} \in \mathcal{T}_{h}^{l}}
    \int_{K_{h}^{l}}\nabla_{\Gamma}u\cdot \nabla_{\Gamma}v + u v\ dA \notag \\ 
&- \sum_{e_h^l \in \mathcal{E}^l_h}
    \int_{e_{h}^{l}}(u^{+}-u^{-})\frac{1}{2}(\nabla_{\Gamma}v^{+}\cdot n^{+} - \nabla_{\Gamma}v^{-}\cdot n^{-}) + 
                     (v^{+}-v^{-})\frac{1}{2}(\nabla_{\Gamma}u^{+}\cdot n^{+} - \nabla_{\Gamma}u^{-}\cdot n^{-})\ ds 
  \notag \\
&+ \sum_{e^l_{h} \in \mathcal{E}^l_{h}}
    \int_{e_{h}^{l}}\beta_{e^l_{h}}(u^{+}-u^{-})(v^{+}-v^{-})\ ds
\end{align}
where $n^{+}$ and $n^{-}$ are respectively the unit surface conormals to $K_h^{l+}$ and $K_h^{l-}$ on $e_{h}^{l} \in \mathcal{E}_{h}^{l}$ and the penalty parameters are defined to be $\beta_{e_h^l} := \frac{\beta_{e_h}}{\delta_{e_h}}$.  
This bilinear form is well defined for functions $u,v \in H^2(\Gamma)+V_{h}^{l}$, and since the weak solution $u$ given by \eqref{eq:weakH1} is in $H^{2}(\Gamma)$ it satisfies
\begin{equation} \label{eq:InteriorPenaltyGamma}
a_{\Gamma}^{IP}(u,v) = \sum_{K_{h}^{l} \in \mathcal{T}_{h}^{l}}\int_{K_{h}^{l}}f v\ dA\ \ \forall v \in H^{2}(\Gamma) + V_{h}^{l}.
\end{equation} 
We now state and prove a technical estimate of importance for boundedness and stability of $a_{\Gamma}^{IP}$. 
\begin{lemma}\label{thm:traceEstimatesLift}
Let $w \in H^2(\Gamma)$ and $w_{h}^{l} \in V_{h}^{l}$. Then for sufficiently small $h$,
\begin{equation}\label{eq:traceEstimateLift}
\norm{\nabla_{\Gamma}(w + w_{h}^{l})}_{L^{2}(\partial K_{h}^{l})}^{2} \leq C \left( \frac{1}{h} \norm{\nabla_{\Gamma}(w + w_{h}^{l})}_{L^{2}(K_{h}^{l})}^{2} + h \norm{w}_{H^{2}(K_{h}^{l})}^{2} \right). 
\end{equation}
\end{lemma}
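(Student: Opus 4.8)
The plan is to prove a scaled trace inequality for the vector field $g := \nabla_{\Gamma}(w + w_h^l)$, which on each curved element $K_h^l$ belongs to $H^1(K_h^l)$: indeed $w \in H^2(\Gamma)$ gives $\nabla_{\Gamma} w \in H^1$, while $w_h^l$ is the lift of an affine function and is therefore smooth on the single element $K_h^l$. First I would establish the auxiliary estimate
\[
\norm{\phi}_{L^{2}(\partial K_h^l)}^{2} \le C\Big( \frac{1}{h}\norm{\phi}_{L^{2}(K_h^l)}^{2} + h\,\norm{\nabla_{\Gamma} \phi}_{L^{2}(K_h^l)}^{2} \Big), \qquad \phi \in H^{1}(K_h^l),
\]
with a constant $C$ independent of $h$ and of the element. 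This is the classical scaled trace theorem on a shape-regular triangle; to obtain it on the \emph{curved} element with a uniform constant I would transfer the integrals over $K_h^l$ and $\partial K_h^l$ to the flat triangle $K_h$ and its boundary, where the estimate is standard, using the area and edge deformation factors $\delta_h$, $\delta_{e_h}$ and the gradient transformation $R_h$ of Lemma~\ref{Gamma2GammahSmall}. Since all of these differ from their flat counterparts by $O(h)$ (indeed $O(h^{2})$ for the measures), the flat inequality passes to $K_h^l$ for $h$ sufficiently small.

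Applying this with $\phi = g$ reduces the proof to controlling $h\,\norm{\nabla_{\Gamma} g}_{L^{2}(K_h^l)}^{2}$, i.e.\ the second tangential derivatives of $w + w_h^l$. Splitting $\nabla_{\Gamma} g = \nabla_{\Gamma}\nabla_{\Gamma} w + \nabla_{\Gamma}\nabla_{\Gamma} w_h^l$, the first term is bounded directly by $C\norm{w}_{H^{2}(K_h^l)}$. The crux is the second term: although $w_h^l$ is not affine on the curved element, its tangential gradient has the form $\nabla_{\Gamma} w_h^l = A\,\nabla_{\Gamma_h} w_h$, where $\nabla_{\Gamma_h} w_h$ is a \emph{constant} tangent vector (because $w_h$ is linear on the planar triangle $K_h$) and $A$ is a smooth matrix field assembled from $P$, $\nu$, $d$ and $H = \nabla^{2} d$. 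Differentiating, only the derivatives of $A$ survive, and these are bounded in terms of the $C^{2}$-geometry of $\Gamma$ alone; since $A$ is moreover uniformly invertible on the relevant tangent vectors for small $h$ (using $\norm{\nu - \nu_h}_{L^{\infty}} \le Ch$ and $\norm{d}_{L^{\infty}} \le Ch^{2}$ from Lemma~\ref{Gamma2GammahSmall}), I would conclude the pointwise, hence $L^{2}$, bound
\[
\norm{\nabla_{\Gamma}\nabla_{\Gamma} w_h^l}_{L^{2}(K_h^l)} \le C\,\norm{\nabla_{\Gamma} w_h^l}_{L^{2}(K_h^l)}
\]
with $C$ depending only on the curvature of $\Gamma$ and, crucially, carrying no negative power of $h$.

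It then remains to recombine. Using the triangle inequality $\norm{\nabla_{\Gamma} w_h^l}_{L^{2}(K_h^l)} \le \norm{g}_{L^{2}(K_h^l)} + \norm{\nabla_{\Gamma} w}_{L^{2}(K_h^l)} \le \norm{g}_{L^{2}(K_h^l)} + \norm{w}_{H^{2}(K_h^l)}$, together with $h \le \frac{1}{h}$ for small $h$, the term $h\,\norm{\nabla_{\Gamma}\nabla_{\Gamma} w_h^l}_{L^{2}(K_h^l)}^{2}$ is absorbed into $\frac{1}{h}\norm{g}_{L^{2}(K_h^l)}^{2}$ and $h\,\norm{w}_{H^{2}(K_h^l)}^{2}$, which together with the $\norm{w}_{H^{2}}$ bound for the smooth part yields exactly \eqref{eq:traceEstimateLift}. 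I expect the main obstacle to be the second-derivative estimate for $w_h^l$: this is where one must use that $w_h$ is affine, so that the flat Hessian vanishes and only the $O(1)$ surface curvature contributes. This is precisely what allows the geometric second-order term to be dominated by first-order data, and is the reason the final estimate involves only $\norm{w}_{H^{2}}$, and not any norm of $w_h^l$, in the $h$-weighted term.
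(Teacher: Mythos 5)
Your proposal is correct, but it runs in the opposite direction to the paper's proof. The paper pulls everything back to the flat triangle first: it defines $\tilde{w}$ and $w_h$ whose lifts are $w$ and $w_h^l$, applies the scaled trace theorem to $\nabla_{\Gamma_h}(\tilde{w}+w_h)$ on the planar element $K_h$ --- where $\nabla^2_{\Gamma_h} w_h = 0$ \emph{identically}, so the $h$-weighted Hessian term contains only $\tilde{w}$ --- and then lifts the finished inequality to $\Gamma$, using the $H^2$-norm equivalence (estimate (2.17) of Demlow's paper) to convert $\norm{\nabla^2_{\Gamma_h}\tilde{w}}_{L^2(K_h)}$ into $\norm{w}_{H^2(K_h^l)}$ and the bound $\norm{P-R_{e_h}}_{L^\infty} \leq Ch^2$ of Lemma~\ref{Gamma2GammahSmall} to recover the plain boundary norm from the $R_{e_h}$-weighted one. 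You instead prove the scaled trace inequality directly on the curved element and apply it to $g=\nabla_\Gamma(w+w_h^l)$, which forces you to confront the fact that $w_h^l$ is \emph{not} affine on $K_h^l$; your key substitute lemma, $\norm{\nabla_\Gamma\nabla_\Gamma w_h^l}_{L^2(K_h^l)} \leq C\norm{\nabla_\Gamma w_h^l}_{L^2(K_h^l)}$ via the factorisation $\nabla_\Gamma w_h^l = A\,\nabla_{\Gamma_h}w_h$ with $\nabla_{\Gamma_h}w_h$ constant, does not appear in the paper and is the genuinely new ingredient, together with the absorption step $h \leq 1/h$. Both proofs ultimately exploit the same fact (linearity of $w_h$ on the flat triangle), but the paper uses it \emph{before} lifting so the Hessian term vanishes exactly, which makes the argument shorter, while your route keeps $w$ intrinsic on $\Gamma$ (no pull-back and hence no $H^2$-equivalence needed for the smooth part) at the price of the curvature bound for $\nabla_\Gamma^2 w_h^l$ --- a by-product of independent interest. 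Two small points: differentiating $A$ hits $\nabla H = \nabla^3 d$, so your constant depends on third derivatives of the distance function, not merely the ``$C^2$-geometry'' as you state (harmless here, since $\Gamma$ is smooth, but worth stating precisely); and the uniform invertibility of $A$ on tangent vectors, which you need to control $\nabla_{\Gamma_h}w_h$ by $\nabla_\Gamma w_h^l$, deserves a line of justification from the estimates of Lemma~\ref{Gamma2GammahSmall}, exactly as you sketch.
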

%We now state and prove some key estimates for functions in $V_{h}^{l}$ and $H^{2}(\Gamma) + V_{h}^{l}$ which will be used throughout this paper. 
%\begin{lemma}\label{thm:traceEstimatesLift}
%Let $w_{h}^{l} \in H^{2}(\Gamma) + V_{h}^{l}$. Then for sufficiently small $h$, $w_{h}^{l}$ satisfies
%\begin{equation}\label{eq:traceEstimateLift1}
%\norm{\nabla_{\Gamma}w_{h}^{l}}_{L^{2}(\partial K_{h}^{l})}^{2} \leq C \left( \frac{1}{h} \norm{\nabla_{\Gamma}w_{h}^{l}}_{L^{2}(K_{h}^{l})}^{2} + h | \nabla_{\Gamma}w_{h}^{l}|_{H^{1}(K_{h}^{l})}^{2} \right). \end{equation}
%In particular, if $w_{h}^{l} \in V_{h}^{l}$, we have
%\begin{equation}\label{eq:traceEstimateLift2}
%\norm{\nabla_{\Gamma}w_{h}^{l}}_{L^{2}(\partial K_{h}^{l})}^{2} \leq C \left( \frac{1}{h} \norm{\nabla_{\Gamma}w_{h}^{l}}_{L^{2}(K_{h}^{l})}^{2} \right). 
%\end{equation}  
%\end{lemma}
\begin{proof}
We define $\tilde{w}$ and $w_{h}$ such that their lifts coincide with $w$ and $w_{h}^{l}$, respectively. Since $\tilde{w} + w_{h} \in H^2(K_{h})$ on each $K_{h}$, applying the trace theorem and a standard scaling argument on $K_{h} \in \mathcal{T}_{h}$ yields
\[\int_{\partial K_{h}}|\nabla_{\Gamma_{h}}(\tilde{w} + w_{h})|^2\ ds_{h} \leq C \left( \frac{1}{h} \int_{K_{h}}|\nabla_{\Gamma_{h}}(\tilde{w}+w_{h})|^{2}\ dA_{h} + h \int_{K_{h}} | \nabla_{\Gamma_{h}}^{2} \tilde{w}|^{2}\ dA_{h} \right) \]
where we used that $\nabla_{\Gamma_{h}}^{2} w_{h} = 0$ thanks to the linearity of the finite element functions.
Lifting the estimate onto $\Gamma$ and making use of estimate (2.17) in \cite{demlow2009higher}, we have
\begin{align*}
&\int_{\partial K_{h}^{l}}\nabla_{\Gamma}(w + w_{h}^{l}) \cdot R_{e_{h}} \nabla_{\Gamma}(w + w_{h}^{l})\ ds \leq C \left( \frac{1}{h} \int_{K_{h}^{l}}|\nabla_{\Gamma}(w + w_{h}^{l})|^2\ dA + h \int_{K_{h}^{l}}  |\nabla_{\Gamma}^{2} w|^2 + |\nabla_{\Gamma} w|^2\ dA \right)
\end{align*}
where $R_{e_{h}}$ is given as in Lemma \ref{Gamma2GammahSmall}. We thus obtain
\[\left(1 - Ch^{2} \right)\int_{\partial K_{h}^{l}}|\nabla_{\Gamma}(w + w_{h}^{l})|^2\ ds \leq C \left( \frac{1}{h} \int_{K_{h}^{l}}|\nabla_{\Gamma}(w + w_{h}^{l})|^{2}\ dA + h \int_{K_{h}^{l}} | \nabla_{\Gamma}^{2} w |^2 + |\nabla_{\Gamma} w|^2\ dA \right) \]
which yields the desired inequality for $h$ small enough. 
%Noticing that if $w_{h} \in V_{h}$ then $|\nabla_{\Gamma_{h}}^{2}w_{h}| = 0$, the second estimate (\ref{eq:traceEstimateLift2}) is then derived similarly.
\end{proof}

Following the lines of \cite{arnold2002unified} along with Lemma \ref{thm:traceEstimatesLift}, we can show the boundedness estimate
\begin{equation}\label{eq:boundednessGammaIP}
a_{\Gamma}^{IP}(w + w_{h}^{l},v_{h}^{l}) \leq C_{b}^{l}\left( \norm{w + w_{h}^{l}}_{DG} + h^2 \norm{w}_{H^{2}(\Gamma)}\right) \norm{v_{h}^{l}}_{DG}   \ \ \mbox{for all} \  \ w \in H^2(\Gamma), \ w_{h}^{l}, v_{h}^{l} \in V_h^l, 
\end{equation} 
and the stability estimate
\begin{equation}\label{eq:stabilityGammaIP}
a_{\Gamma}^{IP}(w_{h}^{l},w_{h}^{l}) \geq C_{s}^{l} \norm{w_{h}^{l}}_{DG}^{2} \  \  \mbox{for all} \  \  w_{h}^{l} \in V_{h}^{l}.   
\end{equation}

\section{Convergence}
\label{sec:Convergence}
\begin{theorem}\label{aprioriErrorEstimateIP}
Let $u \in H^2(\Gamma)$ and $u_{h} \in V_{h}$ denote the solutions to $(\mathbf{P}_{\Gamma})$ and $(\mathbf{P}_{\Gamma_{h}}^{IP})$, respectively. Denote by $u_{h}^{l} \in V_{h}^{l}$ the lift of $u_{h}$ onto $\Gamma$. Then
\[ \norm{u-u_{h}^{l}}_{L^{2}(\Gamma)} + h\norm{u-u_{h}^{l}}_{DG} \leq Ch^2\norm{f}_{L^{2}(\Gamma)}.\]
\end{theorem}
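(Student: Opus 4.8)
The plan is to follow the classical route for interior penalty methods, combining the coercivity estimate with a Strang-type argument that absorbs the geometric perturbation introduced by the lift, and then to obtain the $L^2$ bound by an Aubin--Nitsche duality argument. Throughout, the estimates in Lemma \ref{Gamma2GammahSmall} are the tool that renders all geometric mismatch terms of order $h^2$.

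First I would establish the DG-norm estimate. Let $I_h u \in V_h$ be the (continuous) Lagrange interpolant of $u$ and $(I_h u)^l \in V_h^l$ its lift, and split the error as $u - u_h^l = (u - (I_h u)^l) + \theta_h^l$ with $\theta_h^l := (I_h u)^l - u_h^l \in V_h^l$. The first summand is controlled by the standard surface interpolation estimates as in \cite{dziuk1988finite}, which give $\norm{u - (I_h u)^l}_{DG} \leq Ch\norm{u}_{H^2(\Gamma)}$ (the jump contribution vanishes since both $u$ and the continuous interpolant are single-valued across edges). For the discrete part I would test coercivity (\ref{eq:stabilityGammaIP}) against $\theta_h^l$ and write
\[
C_s^l \norm{\theta_h^l}_{DG}^2 \leq a_\Gamma^{IP}(\theta_h^l,\theta_h^l) = a_\Gamma^{IP}((I_h u)^l - u, \theta_h^l) + \big(a_\Gamma^{IP}(u,\theta_h^l) - a_\Gamma^{IP}(u_h^l,\theta_h^l)\big).
\]
The first term is handled by boundedness (\ref{eq:boundednessGammaIP}) together with the interpolation estimate, giving a bound $\leq Ch\norm{u}_{H^2(\Gamma)}\norm{\theta_h^l}_{DG}$. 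For the bracketed term I would use consistency (\ref{eq:InteriorPenaltyGamma}) to replace $a_\Gamma^{IP}(u,\theta_h^l)$ by $\sum_{K_h^l}\int_{K_h^l} f\theta_h^l\, dA$, and the discrete equation (\ref{eq:InteriorPenaltyGammah}) to replace $a_{\Gamma_h}^{IP}(u_h,\theta_h)$ by $\sum_{K_h}\int_{K_h} f_h\theta_h\, dA_h$; what remains is exactly the geometric consistency error
\[
\big(a_{\Gamma_h}^{IP}(u_h,\theta_h) - a_\Gamma^{IP}(u_h^l,\theta_h^l)\big) + \Big(\sum_{K_h^l}\int_{K_h^l} f\theta_h^l\, dA - \sum_{K_h}\int_{K_h} f_h\theta_h\, dA_h\Big).
\]

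The crux of the argument, and the step I expect to be the main obstacle, is bounding this geometric consistency error by $Ch^2\norm{f}_{L^2(\Gamma)}\norm{\theta_h^l}_{DG}$. I would transform each piece of $a_{\Gamma_h}^{IP}$ from $\Gamma_h$ onto $\Gamma$ via the lift and compare it termwise with the corresponding piece of $a_\Gamma^{IP}$. The bulk term produces the factor $R_h$, and the mismatch is governed by $\norm{P - R_h}_{L^\infty} \leq Ch^2$ and $\norm{1 - \delta_h}_{L^\infty} \leq Ch^2$; the penalty terms match exactly, because $\beta_{e_h^l}$ was defined as $\beta_{e_h}/\delta_{e_h}$; and the consistency/flux terms on the edges are controlled by $\norm{P - R_{e_h}}_{L^\infty}$, $\norm{1-\delta_{e_h}}_{L^\infty}$ and, crucially, the conormal estimate $\norm{n - P n_h^l}_{L^\infty(e_h^l)} \leq Ch^2$ from Lemma \ref{Gamma2GammahSmall}. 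The right-hand side discrepancy reduces to $\sum_{K_h}\int_{K_h} f_h\theta_h(\delta_h-1)\, dA_h$ and is again $O(h^2)$. Using the stability bound (\ref{eq:stabilityEstimateGamma2}), $\norm{u_h^l}_{DG} \leq C\norm{f}_{L^2(\Gamma)}$, together with the equivalence of the two area/edge measures, all these terms are absorbed. Combining with the interpolation term and elliptic regularity (\ref{eq:EllipticRegularity}) yields $\norm{\theta_h^l}_{DG} \leq Ch\norm{f}_{L^2(\Gamma)}$, and hence $\norm{u - u_h^l}_{DG} \leq Ch\norm{f}_{L^2(\Gamma)}$, which is the second term of the claimed estimate.

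Finally I would obtain the $L^2$ estimate by duality. Let $z \in H^2(\Gamma)$ solve the self-adjoint dual problem $-\Delta_\Gamma z + z = u - u_h^l$, so that $\norm{z}_{H^2(\Gamma)} \leq C\norm{u - u_h^l}_{L^2(\Gamma)}$ by (\ref{eq:EllipticRegularity}). Since the symmetric IP form is adjoint consistent and $z$ is smooth, $\norm{u-u_h^l}_{L^2(\Gamma)}^2 = a_\Gamma^{IP}(u-u_h^l, z)$. Inserting the lifted interpolant $z_h^l \in V_h^l$ of $z$ and splitting $z = (z - z_h^l) + z_h^l$, the first contribution is estimated by boundedness together with $\norm{z - z_h^l}_{DG} \leq Ch\norm{z}_{H^2(\Gamma)}$, giving a factor $Ch\norm{u-u_h^l}_{DG}\norm{z}_{H^2(\Gamma)}$; the second contribution $a_\Gamma^{IP}(u - u_h^l, z_h^l)$ is, by the same manipulation as above, a geometric consistency error bounded by $Ch^2\norm{f}_{L^2(\Gamma)}\norm{z}_{H^2(\Gamma)}$. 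Dividing through by $\norm{u-u_h^l}_{L^2(\Gamma)}$ and using the already-proven estimate $\norm{u-u_h^l}_{DG} \leq Ch\norm{f}_{L^2(\Gamma)}$ gives $\norm{u - u_h^l}_{L^2(\Gamma)} \leq Ch^2\norm{f}_{L^2(\Gamma)}$, completing the proof. A minor technical point to verify here is the boundedness of $a_\Gamma^{IP}$ on all of $H^2(\Gamma) + V_h^l$ in both arguments, needed since neither $u - u_h^l$ nor $z - z_h^l$ lies in $V_h^l$, which follows from the trace estimate of Lemma \ref{thm:traceEstimatesLift}.
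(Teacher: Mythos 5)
Your proposal is correct and follows essentially the same route as the paper: the coercivity/boundedness/interpolation splitting with $\phi_h^l=(I_hu)^l$ is identical to the paper's (\ref{eq:first_sec4}), and your ``geometric consistency error'' is exactly the perturbed Galerkin orthogonality functional $E_h$ of Lemma \ref{PerturbedGalerkinOrthogonality}, whose $O(h^2)$ bound you correctly trace to the same ingredients the paper uses --- the conormal estimate $\norm{n-Pn_h^l}_{L^\infty(e_h^l)}\leq Ch^2$, the exact cancellation of the penalty terms via $\beta_{e_h^l}=\beta_{e_h}/\delta_{e_h}$, the trace estimate of Lemma \ref{thm:traceEstimatesLift}, and the stability bound (\ref{eq:stabilityEstimateGamma2}). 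Your spelled-out Aubin--Nitsche duality argument (including the remark that boundedness is needed on all of $H^2(\Gamma)+V_h^l$ in both arguments) is precisely the step the paper delegates to \cite{dziuk1988finite}, so no genuinely different method is involved.
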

The proof will follow an argument similar to the one outlined in \cite{arnold2002unified}. Using the stability result (\ref{eq:stabilityGammaIP}), we have
\begin{equation} \label{eq:first_sec4}
\norm{\phi_{h}^{l}- u_{h}^{l}}_{DG}^{2} \leq 
  \frac{1}{C^l_{s}} a_{\Gamma}^{IP}(\phi_{h}^{l}-u_{h}^{l},\phi_{h}^{l}- u_{h}^{l}) = \frac{1}{C^l_{s}} a_{\Gamma}^{IP}(u-u_{h}^{l},\phi_{h}^{l}-u_{h}^{l}) + \frac{1}{C^l_{s}} a_{\Gamma}^{IP}(\phi_{h}^{l}-u,\phi_{h}^{l}-u_{h}^{l})
\end{equation}
where $\phi_{h}^{l} \in V_{h}^{l}$. Since we do not directly have Galerkin orthogonality the first term is not zero, and the second term will require an interpolation estimate. These terms are addressed by the following lemmas:
\begin{lemma}\label{interpolationEstimate}
For a given $w \in H^{2}(\Gamma)$ there exists an interpolant $I_{h}^{l}w \in V_{h}^{l}$ such that
\[\norm{w - I_{h}^{l}w}_{L^{2}(\Gamma)} + h\norm{\nabla_{\Gamma}(w - I_{h}^{l}w)}_{L^{2}(\Gamma)} \leq Ch^{2}\left(\norm{\nabla^{2}_{\Gamma}w}_{L^{2}(\Gamma)} + h\norm{\nabla_{\Gamma}w}_{L^{2}(\Gamma)}\right). \]
\end{lemma}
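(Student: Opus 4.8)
The plan is to build the interpolant by transporting $w$ to the polyhedral surface, applying classical piecewise-linear Lagrange interpolation on the planar triangles, and lifting back. Concretely, I would set $\tilde w := w\circ\xi$ for the restriction to $\Gamma_h$ of $w$ pulled back along the projection $\xi$ of (\ref{eq:uniquePoint}); since $\Gamma$ is two-dimensional the embedding $H^2(\Gamma)\hookrightarrow C^0(\Gamma)$ guarantees that the nodal values of $\tilde w$ at the vertices of $\mathcal T_h$ are well defined. Let $I_h\tilde w\in V_h$ be the continuous piecewise-linear nodal interpolant (which lies in $V_h$ because an affine function on each triangle belongs to $P^1(K_h)$), and define $I_h^l w := (I_h\tilde w)^l\in V_h^l$.

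On a fixed flat triangle $K_h$ the standard Bramble--Hilbert and scaling argument for $P^1$ interpolation gives
\[
\norm{\tilde w - I_h\tilde w}_{L^2(K_h)} + h\,\norm{\nabla_{\Gamma_h}(\tilde w - I_h\tilde w)}_{L^2(K_h)} \le C h^2\, |\tilde w|_{H^2(K_h)},
\]
where $|\cdot|_{H^2(K_h)}$ is the flat, in-plane second-derivative seminorm. Summing over $K_h\in\mathcal T_h$ and converting the $\Gamma_h$-norms on the left into $\Gamma$-norms is routine: by Lemma \ref{Gamma2GammahSmall} the area element satisfies $\delta_h = 1 + O(h^2)$ and the tangential gradients are related through $R_h$ with $\norm{P - R_h}_{L^\infty}\le Ch^2$, so the lifted $L^2$- and $H^1$-norms are equivalent to their discrete counterparts up to a factor $1+O(h^2)$. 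It then remains to bound the discrete seminorm $|\tilde w|_{H^2(K_h)}$ by the intrinsic quantities $\norm{\nabla_\Gamma^2 w}_{L^2(K_h^l)}$ and $\norm{\nabla_\Gamma w}_{L^2(K_h^l)}$.

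This last step is the crux. Differentiating $\tilde w = w\circ\xi$ twice along the constant tangent directions $t_\alpha$ of the planar triangle, the chain rule produces a leading term of the form $\nabla_\Gamma^2 w(D\xi\,t_\alpha, D\xi\,t_\beta)$, which by Lemma \ref{Gamma2GammahSmall} agrees with $\nabla_\Gamma^2 w(t_\alpha,t_\beta)$ up to $O(h^2)$, plus a curvature correction $\nabla_\Gamma w\cdot (t_\beta\cdot\nabla)(D\xi\,t_\alpha)$. Expanding the tangential differential $D\xi = P(I - dH)$ and using that $\nabla_\Gamma w$ is tangential to $\Gamma$, the dominant part of this correction is proportional to $(\nu\cdot t_\alpha)\,(\nabla_\Gamma w\cdot H t_\beta)$. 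The pivotal observation is that $t_\alpha$ is tangential to $\Gamma_h$ while $\nu$ is normal to $\Gamma$, so $\nu\cdot t_\alpha = (\nu-\nu_h)\cdot t_\alpha = O(h)$ by the bound $\norm{\nu - \nu_h}_{L^\infty}\le Ch$ of Lemma \ref{Gamma2GammahSmall} (the remaining contributions carry an extra factor $\norm{d}_{L^\infty}=O(h^2)$). This yields
\[
|\tilde w|_{H^2(K_h)}^2 \le C\left(\norm{\nabla_\Gamma^2 w}_{L^2(K_h^l)}^2 + h^2\,\norm{\nabla_\Gamma w}_{L^2(K_h^l)}^2\right),
\]
and substituting this into the summed interpolation estimate produces the claimed bound.

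The main obstacle is precisely this transfer of second derivatives: one has to keep track of how the Hessian computed on the flat triangles perceives the curvature of $\Gamma$, and it is the $O(h)$ deviation of the tangent planes of $\Gamma_h$ from those of $\Gamma$ that refines the naive factor on $\norm{\nabla_\Gamma w}$ into the sharper $h\,\norm{\nabla_\Gamma w}$ appearing on the right-hand side. This type of analysis is carried out for surface finite elements in \cite{dziuk1988finite} and \cite{demlow2009higher}, whose interpolation estimates I would follow.
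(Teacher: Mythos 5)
Your proposal is correct and is essentially the paper's own proof: the paper simply cites \cite{dziuk1988finite} for this lemma, and the argument there is exactly your construction --- pull back $w$ along $\xi$, interpolate by continuous piecewise linears on the flat triangles, lift, and control the pulled-back Hessian via the geometric estimates. In particular, you correctly identify the one non-routine point, namely that the curvature correction in the chain rule pairs $\nabla_\Gamma w$ with the $O(h)$ quantity $\nu\cdot t_\alpha=(\nu-\nu_h)\cdot t_\alpha$, which is precisely what produces the sharper $h\norm{\nabla_\Gamma w}_{L^{2}(\Gamma)}$ term in the right-hand side rather than $\norm{\nabla_\Gamma w}_{L^{2}(\Gamma)}$.
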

\begin{proof}
See \cite{dziuk1988finite}.
\end{proof}
\begin{lemma} \label{PerturbedGalerkinOrthogonality}
Let $u$ and $u_{h}^{l}$ be given as in Theorem \ref{aprioriErrorEstimateIP} and define
the functional $E_h$ on $V_{h}^{l}$ by
\begin{equation*} 
E_h(v_h^l) := a_{\Gamma}^{IP}(u-u^{l}_{h},v^{l}_{h}).
\end{equation*}

Then $E_h$ can be written as
\begin{align*}
E_{h}(v_{h}^{l}) &= \sum_{K_{h}^{l} \in \mathcal{T}_{h}^{l}}\int_{K_{h}^{l}}(R_{h}-P)\nabla_{\Gamma}u_{h}^{l} \cdot\nabla_{\Gamma}v_{h}^{l}+\left(\frac{1}{\delta_{h}}-1\right)u_{h}^{l}v_{h}^{l} + \left(1-\frac{1}{\delta_{h}}\right)fv_{h}^{l}\ dA \\
&+ \sum_{e_{h}^{l} \in \mathcal{E}_{h}^{l}}\int_{e_{h}^{l}}(u_{h}^{l+}-u_{h}^{l-})\frac{1}{2}(\nabla_{\Gamma}v_{h}^{l+}\cdot n^{+} - \nabla_{\Gamma} v_{h}^{l-}\cdot n^{-}) \notag \\
& \hspace{1.5cm} -(u_{h}^{l+}-u_{h}^{l-})\frac{1}{2}(P_{h}^{+}(I-dH)P\nabla_{\Gamma}v_{h}^{l+}\cdot n_{h}^{l+} - P_{h}^{-}(I-dH)P\nabla_{\Gamma}v_{h}^{l-}\cdot n_{h}^{l-})\frac{1}{\delta_{e_{h}}}\ ds \notag \\ 
&+ \sum_{e_{h}^{l} \in \mathcal{E}_{h}^{l}}\int_{e_{h}^{l}}(v_{h}^{l+}-v_{h}^{l-})\frac{1}{2}(\nabla_{\Gamma}u_{h}^{l+}\cdot n^{+} - \nabla_{\Gamma} u_{h}^{l-}\cdot n^{-}) \notag \\
& \hspace{1.5cm} -(v_{h}^{l+}-v_{h}^{l-})\frac{1}{2}(P_{h}^{+}(I-dH)P\nabla_{\Gamma}u_{h}^{l+}\cdot n_{h}^{l+} - P_{h}^{-}(I-dH)P\nabla_{\Gamma}u_{h}^{l-}\cdot n_{h}^{l-})\frac{1}{\delta_{e_{h}}}\ ds 
\end{align*}
where $R_h$ is given as in Lemma \ref{Gamma2GammahSmall}. Furthermore, $E_h$ scales quadratically in $h$ i.e.
\begin{equation} \label{eq:Eh_quad}
|E_h(v_{h}^{l})| \leq Ch^2\norm{f}_{L^{2}(\Gamma)}\norm{v_{h}^{l}}_{DG}.
\end{equation}
\end{lemma}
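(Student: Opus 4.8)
The plan is to read off the identity from the two variational relations already at our disposal, namely the continuous one \eqref{eq:InteriorPenaltyGamma} satisfied by $u$ and the discrete one \eqref{eq:InteriorPenaltyGammah} satisfied by $u_h$, and to attribute the entire discrepancy between $a_\Gamma^{IP}$ and the lift of $a_{\Gamma_h}^{IP}$ to the geometric deformation factors quantified in Lemma \ref{Gamma2GammahSmall}. By bilinearity and \eqref{eq:InteriorPenaltyGamma} I would first write $E_h(v_h^l) = a_\Gamma^{IP}(u,v_h^l) - a_\Gamma^{IP}(u_h^l,v_h^l) = \sum_{K_h^l}\int_{K_h^l} f\,v_h^l\,dA - a_\Gamma^{IP}(u_h^l,v_h^l)$. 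Next I would transport the discrete equation \eqref{eq:InteriorPenaltyGammah} onto $\Gamma$: since $f_h^l=f$ and $dA=\delta_h\,dA_h$, the right-hand side of \eqref{eq:InteriorPenaltyGammah} equals $\sum_{K_h^l}\int_{K_h^l} f\,v_h^l\,\tfrac{1}{\delta_h}\,dA$, so that $a_{\Gamma_h}^{IP}(u_h,v_h)$ differs from $\sum_{K_h^l}\int_{K_h^l} f\,v_h^l\,dA$ by exactly the term $\sum_{K_h^l}\int_{K_h^l}(1-\tfrac{1}{\delta_h})f\,v_h^l\,dA$ appearing in the claimed formula.

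It then remains to compare $a_{\Gamma_h}^{IP}(u_h,v_h)$ with $a_\Gamma^{IP}(u_h^l,v_h^l)$ integrand by integrand after pulling the former back to $\Gamma$. For this I would use the standard transformation rules under the lift (\cite{dziuk1988finite,demlow2009higher}): the Jacobians $dA=\delta_h\,dA_h$ and $ds=\delta_{e_h}\,ds_h$, the single-gradient relation $\nabla_{\Gamma_h} w = P_h(I-dH)P\nabla_\Gamma w^l$, and the fact that jumps are preserved by the lift. In the bulk, $\int_{K_h}\nabla_{\Gamma_h}u_h\cdot\nabla_{\Gamma_h}v_h\,dA_h$ becomes $\int_{K_h^l}R_h\nabla_\Gamma u_h^l\cdot\nabla_\Gamma v_h^l\,dA$ with $R_h$ as in Lemma \ref{Gamma2GammahSmall}, and the mass term picks up a factor $\tfrac{1}{\delta_h}$; subtracting the matching terms of $a_\Gamma^{IP}$ leaves the integrands $(R_h-P)\nabla_\Gamma u_h^l\cdot\nabla_\Gamma v_h^l$ and $(\tfrac{1}{\delta_h}-1)u_h^l v_h^l$. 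On the edges, each flux $\nabla_{\Gamma_h}v_h^\pm\cdot n_h^\pm$ transforms into $\tfrac{1}{\delta_{e_h}}P_h^\pm(I-dH)P\nabla_\Gamma v_h^{l\pm}\cdot n_h^{l\pm}$, which is precisely the second flux in the stated expression, and likewise for the $u$-flux; crucially the penalty terms cancel identically, because the choice $\beta_{e_h^l}=\beta_{e_h}/\delta_{e_h}$ makes $\int_{e_h}\beta_{e_h}(\cdot)(\cdot)\,ds_h$ coincide with $\int_{e_h^l}\beta_{e_h^l}(\cdot)(\cdot)\,ds$. Collecting these contributions reproduces the asserted formula for $E_h$.

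For the bound \eqref{eq:Eh_quad} I would estimate each group with Lemma \ref{Gamma2GammahSmall}. The three bulk integrands carry an $L^\infty$ factor $R_h-P$, $\tfrac{1}{\delta_h}-1$ or $1-\tfrac{1}{\delta_h}$, all $O(h^2)$, so Cauchy--Schwarz bounds them by $Ch^2\|u_h^l\|_{DG}\|v_h^l\|_{DG}$ and $Ch^2\|f\|_{L^2(\Gamma)}\|v_h^l\|_{DG}$. The main obstacle is the edge terms, and the key observation is a pointwise one: with $g=\nabla_\Gamma v_h^l$ tangential, the bracketed flux difference reduces, using $P_h n_h^l = n_h^l$, to $g\cdot(n-Pn_h^l)$ plus contributions weighted by $\|d\|_{L^\infty}$ and $|1-\tfrac{1}{\delta_{e_h}}|$, each $O(h^2)$. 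This is delicate since $\|\nu-\nu_h\|_{L^\infty}$ is only $O(h)$; it is exactly the second-order estimate $\|n-Pn_h^l\|_{L^\infty(e_h^l)}\le Ch^2$ of Lemma \ref{Gamma2GammahSmall} that saves the convergence rate. Having bounded the bracket by $Ch^2(|\nabla_\Gamma v_h^{l+}|+|\nabla_\Gamma v_h^{l-}|)$, I would apply Cauchy--Schwarz edge by edge, pairing the jump with the weight $h_{e_h^l}^{-1/2}$ to recover $|u_h^l|_{*,h}\le\|u_h^l\|_{DG}$, and controlling $\sum_{e_h^l}h_{e_h^l}\|\nabla_\Gamma v_h^l\|_{L^2(e_h^l)}^2\le C\|v_h^l\|_{DG}^2$ by the trace estimate of Lemma \ref{thm:traceEstimatesLift} with $w=0$. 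Finally I would invoke the stability bound \eqref{eq:stabilityEstimateGamma2}, $\|u_h^l\|_{DG}\le C\|f\|_{L^2(\Gamma)}$, to replace every $\|u_h^l\|_{DG}$, turning all contributions into $Ch^2\|f\|_{L^2(\Gamma)}\|v_h^l\|_{DG}$ and yielding \eqref{eq:Eh_quad}.
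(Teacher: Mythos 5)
Your proposal is correct and takes essentially the same route as the paper's own proof: you derive the identity by subtracting the lifted discrete equation from the continuous one term by term (with the penalty terms cancelling exactly thanks to $\beta_{e_h^l}=\beta_{e_h}/\delta_{e_h}$), and you estimate the edge fluxes via the same pointwise reduction, using $P_h^\pm n_h^{l\pm}=n_h^{l\pm}$ and symmetry, to $(n^\pm - Pn_h^{l\pm})\cdot\nabla_\Gamma u_h^{l\pm}$ plus a $dH$ term, all controlled by the $O(h^2)$ estimates of Lemma \ref{Gamma2GammahSmall}, weighted Cauchy--Schwarz, trace-type bounds as in Lemma \ref{thm:traceEstimatesLift}, and the stability estimate \eqref{eq:stabilityEstimateGamma2}. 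No gaps to report.
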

\begin{remark}
Note that the error functional $E_{h}$ in Lemma \ref{PerturbedGalerkinOrthogonality} includes all of the terms of the classical FEM setting (see \cite{dziuk1988finite}) as well as additional terms arising from the jumps across elements which characterise the DG method. 
\end{remark}
The proof of Lemma \ref{PerturbedGalerkinOrthogonality} will be the main part of this section. Before we give its full proof, we will complete that of Theorem~\ref{aprioriErrorEstimateIP} assuming this result. Using the estimate (\ref{eq:first_sec4}) given at the start of the proof of Theorem \ref{aprioriErrorEstimateIP}, the boundedness result (\ref{eq:boundednessGammaIP}), 
%(along with the fact that $|\nabla_{\Gamma}^{2}\phi_{h}^{l}| = 0$), 
the elliptic regularity result (\ref{eq:EllipticRegularity}) and the quadratic scaling of $E_h$ in $h$ (\ref{eq:Eh_quad}), we have
\begin{align*}
\norm{\phi_{h}^{l}- u_{h}^{l}}_{DG}^{2}  &\leq 
     \frac{1}{C^l_{s}} E_{h}(\phi_{h}^{l}-u_{h}^{l}) + 
     \frac{1}{C^l_{s}}
     a_{\Gamma}^{IP}(\phi_{h}^{l}-u,\phi_{h}^{l}-u_{h}^{l})\\  
&\leq \frac{1}{C^l_{s}} E_{h}(\phi_{h}^{l}-u_{h}^{l}) +
       \frac{C_b^l}{C^l_{s}}
       \left( \norm{\phi_{h}^{l}-u}_{DG} + h^2 \norm{u}_{H^{2}(\Gamma)} \right)\norm{\phi_{h}^{l}-u_{h}^{l}}_{DG} \notag \\
  &\leq Ch^2\norm{f}_{L^{2}(\Gamma)}\norm{\phi_{h}^{l}-u_{h}^{l}}_{DG} +
  C\left(\norm{\phi_{h}^{l}-u}_{DG} + h^2 \norm{f}_{L^{2}(\Gamma)} \right)\norm{\phi_{h}^{l}-u_{h}^{l}}_{DG} \notag, 
\end{align*} 
thus
\[ \norm{\phi_{h}^{l}- u_{h}^{l}}_{DG} \leq Ch^2\norm{f}_{L^{2}(\Gamma)} + C\norm{\phi_{h}^{l}-u}_{DG}. \]
Now taking the continuous interpolant $\phi_{h}^{l} = I^l_hu$ and using
Lemma~\ref{interpolationEstimate} we obtain
\begin{align*}
  \norm{u-u_h^l}_{DG} &\leq \norm{u-\phi_{h}^{l}}_{DG} + \norm{\phi_{h}^{l}- u_{h}^{l}}_{DG}
      \leq \norm{u-\phi_{h}^{l}}_{DG} + Ch^2\norm{f}_{L^{2}(\Gamma)} +C\norm{\phi_{h}^{l}-u}_{DG} \leq Ch\norm{f}_{L^{2}(\Gamma)}
\end{align*}
as required. The $L^{2}$ error estimate can be derived using the usual Aubin-Nitsche trick in a similar way as in \cite{dziuk1988finite}, which concludes the proof of Theorem \ref{aprioriErrorEstimateIP}. 

\begin{proof}[Proof of Lemma \ref{PerturbedGalerkinOrthogonality}]
The expression for the error functional $E_{h}$ given in Lemma \ref{PerturbedGalerkinOrthogonality} is obtained by considering the difference between the two equations (\ref{eq:InteriorPenaltyGamma}) and (\ref{eq:InteriorPenaltyGammah}). In order to do this, the integrals of (\ref{eq:InteriorPenaltyGammah}) have to first be lifted onto $\Gamma$. For every $K_{h} \in \mathcal{T}_{h}$, we have 
\begin{align*} 
\int_{K_{h}} \nabla_{\Gamma_{h}}u_{h} \cdot \nabla_{\Gamma_{h}}v_{h} + u_{h}v_{h}\ dA_{h} = \int_{K_{h}^{l}} R_{h}\nabla_{\Gamma}u_{h}^{l} \cdot\nabla_{\Gamma}v_{h}^{l} + \frac{1}{\delta_{h}}u_{h}^{l} v_{h}^{l}\ dA.
\end{align*}
Furthermore, for every $e_{h} \in \mathcal{E}_{h}$, we have
\begin{align*}
\int_{e_{h}}&(u_{h}^{+}-u_{h}^{-})\frac{1}{2}(\nabla_{\Gamma_{h}}v_{h}^{+}\cdot n_{h}^{+} - \nabla_{\Gamma_{h}}v_{h}^{-}\cdot n_{h}^{-}) + (v_{h}^{+}-v_{h}^{-})\frac{1}{2}(\nabla_{\Gamma_{h}}u_{h}^{+}\cdot n_{h}^{+} - \nabla_{\Gamma_{h}}u_{h}^{-}\cdot n_{h}^{-})\ ds_{h} \notag \\
= \int_{e_{h}^{l}}&(u_{h}^{l+}-u_{h}^{l-})\frac{1}{2}(P_{h}^{+}(I-dH)P\nabla_{\Gamma}v_{h}^{l+}\cdot n_{h}^{l+} - P_{h}^{-}(I-dH)P\nabla_{\Gamma}v_{h}^{l-}\cdot n_{h}^{l-}) \notag \\
&+ (v_{h}^{l+}-v_{h}^{l-})\frac{1}{2}(P_{h}^{+}(I-dH)P\nabla_{\Gamma}u_{h}^{l+}\cdot n_{h}^{l+} - P_{h}^{-}(I-dH)P\nabla_{\Gamma}u_{h}^{l-}\cdot n_{h}^{l-})\frac{1}{\delta_{e_{h}}}\ ds.  
\end{align*}
And finally, we have using $\beta_{e^l_{h}} = \frac{\beta_{e_{h}}}{\delta_{e_{h}}}$ that
\begin{align*}
\int_{e_{h}}\beta_{e_{h}}(u_{h}^{+}-u_{h}^{-})(v_{h}^{+}-v_{h}^{-})\ ds_{h} = 
    \int_{e_{h}^{l}}\beta_{e^l_{h}}(u_{h}^{l+}-u_{h}^{l-})(v_{h}^{l+}-v_{h}^{l-})\,ds.
\end{align*}
The right-hand side of (\ref{eq:InteriorPenaltyGammah}) gets transformed in a similar way:
\[\sum_{K_{h} \in \mathcal{T}_{h}}\int_{K_{h}}f_{h} v_{h}\ dA_{h} = \sum_{K_{h}^{l} \in \mathcal{T}_{h}^{l}}\int_{K_{h}^{l}}f v_{h}^{l}\frac{1}{\delta_{h}}\ dA .\]
Making use of the above, the difference between the two equations (\ref{eq:InteriorPenaltyGamma}) and (\ref{eq:InteriorPenaltyGammah}) yields
\begin{align*}
0 &= a_{\Gamma}^{IP}(u,v_{h}^{l}) - \sum_{K_{h}^{l} \in \mathcal{T}_{h}^{l}}\int_{K_{h}^{l}}f v_{h}^{l}\ dA - a_{\Gamma_{h}}^{IP}(u_{h},v_{h}) + \sum_{K_{h} \in \mathcal{T}_{h}}\int_{K_{h}}f_{h} v_{h}\ dA_{h} \\
&= a_{\Gamma}^{IP}(u-u_{h}^{l},v_{h}^{l}) - E_{h}^{l}(v_{h}^{l}) 
\end{align*}
as required.

Finally we need to show that the error functional $E_h$ scales quadratically in $h$ i.e. 
\[|E_h(v_{h}^{l})| \leq Ch^2\norm{f}_{L^{2}(\Gamma)}\norm{v_{h}^{l}}_{DG}.\] 
To this end
we need to show that the additional terms arising in the error functional $E_{h}$ do not affect the convergence rates expressed in \cite{dziuk1988finite}.
The first term of the error functional $E_{h}$ (the element integral) is the one resulting from the standard surface FEM approach. By Lemma \ref{Gamma2GammahSmall} this term scales quadratically in $h$ and making use of the stability estimate (\ref{eq:stabilityEstimateGamma2}) this term scales like the right-hand side of (\ref{eq:Eh_quad}). We will now get a bound for the third term of $E_{h}$, for which 
we have the following:
\begin{align*}
\sum_{e_{h}^{l} \in \mathcal{E}_{h}}&\int_{e_{h}^{l}}(v_{h}^{l+}-v_{h}^{l-})\frac{1}{2}(\nabla_{\Gamma}u_{h}^{l+}\cdot n^{+} - \nabla_{\Gamma} u_{h}^{l-}\cdot n^{-})\left(1+\frac{1}{\delta_{e_{h}}}-\frac{1}{\delta_{e_{h}}}\right) \\
&-(v_{h}^{l+}-v_{h}^{l-})\frac{1}{2}(P_{h}^{+}(I-dH)P\nabla_{\Gamma}u_{h}^{l+}\cdot n_{h}^{l+} - P_{h}^{-}(I-dH)P\nabla_{\Gamma}u_{h}^{l-}\cdot n_{h}^{l-})\frac{1}{\delta_{e_{h}}}\ ds \\
= \sum_{e_{h}^{l} \in \mathcal{E}_{h}^{l}}&\int_{e_{h}^{l}} (v_{h}^{l+}-v_{h}^{l-})\frac{1}{2}(\nabla_{\Gamma}u_{h}^{l+}\cdot n^{+} - \nabla_{\Gamma} u_{h}^{l-}\cdot n^{-})\left(1-\frac{1}{\delta_{e_{h}}}\right) + \frac{1}{\delta_{e_{h}}}(v_{h}^{l+}-v_{h}^{l-})\frac{1}{2}\Big((\nabla_{\Gamma}u_{h}^{l+}\cdot n^{+}\\ 
&- \nabla_{\Gamma}u_{h}^{l-}\cdot n^{-}) -(P_{h}^{+}(I-dH)P\nabla_{\Gamma}u_{h}^{l+}\cdot n_{h}^{l+}
- P_{h}^{-}(I-dH)P\nabla_{\Gamma}u_{h}^{l-}\cdot n_{h}^{l-}) \Big)\ ds.
\end{align*}
Making use of standard arguments as found in \cite{arnold2002unified} along with Lemma \ref{thm:traceEstimatesLift}, Lemma \ref{Gamma2GammahSmall} and the stability estimate (\ref{eq:stabilityEstimateGamma2}) it is clear that the first component in the above scales appropriately, so all we have to deal with is the second component. We first note that
\begin{align*} \nabla_{\Gamma}u_{h}^{l+}\cdot n^{+} - P_{h}^{+}(I-dH)P\nabla_{\Gamma}u_{h}^{l+}\cdot n_{h}^{l+} = \nabla_{\Gamma}u_{h}^{l+}\cdot n^{+} - \nabla_{\Gamma}u_{h}^{l+}\cdot P(I-dH)P_{h}^{+}n_{h}^{l+} \\
= \nabla_{\Gamma}u_{h}^{l+}\cdot n^{+} - \nabla_{\Gamma}u_{h}^{l+}\cdot P(I-dH)n_{h}^{l+} = \nabla_{\Gamma}u_{h}^{l+}\cdot (n^{+} - Pn_{h}^{l+}) + d H\nabla_{\Gamma}u_{h}^{l+}\cdot n_{h}^{l+},
\end{align*}
hence
\begin{align*}
\sum_{e_{h}^{l} \in \mathcal{E}_{h}^{l}}&\int_{e_{h}^{l}}\frac{1}{\delta_{e_{h}}}(v_{h}^{l+}-v_{h}^{l-})\frac{1}{2}\Big((\nabla_{\Gamma}u_{h}^{l+}\cdot n^{+} - \nabla_{\Gamma}u_{h}^{l-}\cdot n^{-})\\ 
&-(P_{h}^{+}(I-dH)P\nabla_{\Gamma}u_{h}^{l+}\cdot n_{h}^{l+} - P_{h}^{-}(I-dH)P\nabla_{\Gamma}u_{h}^{l-}\cdot n_{h}^{l-}) \Big)\ ds \\
= \sum_{e_{h}^{l} \in \mathcal{E}_{h}^{l}}&\int_{e_{h}^{l}}\frac{1}{\delta_{e_{h}}}(v_{h}^{l+}-v_{h}^{l-})\frac{1}{2}\Big((n^{+}-Pn_{h}^{l+})\cdot \nabla_{\Gamma}u_{h}^{l+} + d H (\nabla_{\Gamma}u_{h}^{l+}\cdot n_{h}^{l+}-\nabla_{\Gamma}u_{h}^{l-} \cdot n_{h}^{l-})\\ 
&+ (Pn_{h}^{l-}-n^{-})\cdot \nabla_{\Gamma}u_{h}^{l-} \Big)\ ds.   
\end{align*}
For the first component of the above, we have
\begin{align*}
&\sum_{e_{h}^{l} \in \mathcal{E}_{h}^{l}}\int_{e_{h}^{l}}\frac{1}{\delta_{e_{h}}}(v_{h}^{l+}-v_{h}^{l-})\frac{1}{2}(n^{+}-Pn_{h}^{l+})\cdot \nabla_{\Gamma}u_{h}^{l+}\ ds\\ 
&\leq \norm{v_{h}^{l}}_{DG}\left(\sum_{e_{h}^{l} \in \mathcal{E}_{h}^{l}}\int_{e_{h}^{l}}\frac{1}{4}\frac{1}{(\delta_{e_{h}})^{2}}h_{e_{h}^{l}}\Big((n^{+}-Pn_{h}^{l+})\cdot \nabla_{\Gamma} u_{h}^{l+}\Big)^{2}\ ds\right)^{\frac{1}{2}} 
\end{align*}
after applying Cauchy-Schwartz. Using similar arguments as done for proving boundedness of the classical IP method (see \cite{arnold2002unified}), we have
\begin{align*}
&\sum_{e_{h}^{l} \in \mathcal{E}_{h}^{l}}\int_{e_{h}^{l}}\frac{1}{4}\frac{1}{(\delta_{e_{h}})^{2}}h_{e_{h}^{l}}\Big((n^{+}-Pn_{h}^{l+})\cdot \nabla_{\Gamma} u_{h}^{l+}\Big)^{2}\ ds \leq \sum_{e_{h}^{l} \in \mathcal{E}_{h}^{l}}\int_{e_{h}^{l}}\frac{1}{4}\frac{1}{(\delta_{e_{h}})^{2}}h_{e_{h}^{l}}|n^{+}-Pn_{h}^{l+}|^{2} |\nabla_{\Gamma} u_{h}^{l+}|^{2}\ ds \\
&\leq C\max_{e_{h}^{l} \in \mathcal{E}_{h}^{l}}\norm{n^{+}-Pn_{h}^{l+}}_{L^{\infty}(e_{h}^{l})}^{2} \sum_{K_{h}^{l} \in \mathcal{T}_{h}^{l}}\sum_{e_{h}^{l} \in \partial K_{h}^{l}} h_{e_{h}^{l}} \norm{\left. \nabla_{\Gamma}u_{h}^{l} \right |_{K_{h}^{l}}}_{L^{2}(e_{h}^{l})}^{2} \leq  C\max_{e_{h}^{l} \in \mathcal{E}_{h}^{l}}\norm{n^{+}-Pn_{h}^{l+}}_{L^{\infty}(e_{h}^{l})}^{2}\norm{u_{h}^{l}}_{DG}^{2}
\end{align*}
where the last inequality was derived using similar arguments as in the proof of Lemma \ref{thm:traceEstimatesLift}. For the second component, we have
\begin{align*}
&\sum_{e_{h}^{l} \in \mathcal{E}_{h}^{l}}\int_{e_{h}^{l}}\frac{1}{\delta_{e_{h}}}(v_{h}^{l+}-v_{h}^{l-})\frac{1}{2}(dH \nabla_{\Gamma}u_{h}^{l+} \cdot n_{h}^{l+})\ ds\\ 
&\leq \norm{v_{h}^{l}}_{DG}\left(\sum_{e_{h}^{l} \in \mathcal{E}_{h}^{l}}\int_{e_{h}^{l}}\frac{1}{4}\frac{1}{(\delta_{e_{h}})^{2}}h_{e_{h}^{l}}\Big(d H \nabla_{\Gamma}u_{h}^{l+}\cdot n_{h}^{l+}\Big)^{2}\ ds\right)^{\frac{1}{2}}. 
\end{align*}
Pursuing the analysis as before and using the fact that the Hessian $H$ is symmetric and bounded, we have  
\begin{align*}
&\sum_{e_{h}^{l} \in \mathcal{E}_{h}^{l}}\int_{e_{h}^{l}}\frac{1}{4}\frac{1}{(\delta_{e_{h}})^{2}}h_{e_{h}^{l}}\Big(d H \nabla_{\Gamma}u_{h}^{l+}\cdot n_{h}^{l+}\Big)^{2}\ ds = \sum_{e_{h}^{l} \in \mathcal{E}_{h}^{l}}\int_{e_{h}^{l}}\frac{1}{4}\frac{1}{(\delta_{e_{h}})^{2}}h_{e_{h}^{l}}d^{2} |H \nabla_{\Gamma}u_{h}^{l+}\cdot n_{h}^{l+}|^{2}\ ds \\
&= \sum_{e_{h}^{l} \in \mathcal{E}_{h}^{l}}\int_{e_{h}^{l}}\frac{1}{4}\frac{1}{(\delta_{e_{h}})^{2}}h_{e_{h}^{l}}d^{2} |\nabla_{\Gamma}u_{h}^{l+}\cdot H n_{h}^{l+}|^{2}\ ds \leq \sum_{e_{h}^{l} \in \mathcal{E}_{h}^{l}}\int_{e_{h}^{l}}\frac{1}{4}\frac{1}{(\delta_{e_{h}})^{2}}h_{e_{h}^{l}}d^{2} |\nabla_{\Gamma}u_{h}^{l+}|^{2} |H P n_{h}^{l+}|^{2}\ ds \\ 
&\leq  C\norm{d}_{L^{\infty}(\Gamma)}^{2} \sum_{K_{h}^{l} \in \mathcal{T}_{h}^{l}}\sum_{e_{h}^{l} \in \partial K_{h}^{l}} h_{e_{h}^{l}} \norm{\left. \nabla_{\Gamma}u_{h}^{l} \right |_{K_{h}^{l}}}_{L^{2}(e_{h}^{l})}^{2} \leq C \norm{d}_{L^{\infty}(\Gamma)}^{2} \norm{u_{h}^{l}}_{DG}^{2}
\end{align*}
where again the last inequality follows from applying similar arguments as in the proof of Lemma \ref{thm:traceEstimatesLift}. 

We can now estimate the error functional $E_{h}$:
\begin{align*}
|E_{h}(v_{h}^{l})| & \leq \norm{R_{h}-P}_{L^{\infty}(\Gamma)}\norm{u_{h}^{l}}_{DG}\norm{v_{h}^{l}}_{DG} + \norm{\frac{1}{\delta_{h}}-1}_{L^{\infty}(\Gamma)}\norm{u_{h}^{l}}_{DG}\norm{v_{h}^{l}}_{DG}\\
&+ \norm{1-\frac{1}{\delta_{h}}}_{L^{\infty}(\Gamma)}\norm{f}_{L^{2}(\Gamma)}\norm{v_{h}^{l}}_{DG} 
+ C \max_{e_{h}^{l} \in \mathcal{E}_{h}^{l}}\norm{n-P n_{h}^{l}}_{L^{\infty}(e_{h}^{l})}\norm{u_{h}^{l}}_{DG}\norm{v_{h}^{l}}_{DG}\\ 
&+  C \norm{d}_{L^{\infty}(\Gamma)}\norm{u_{h}^{l}}_{DG}\norm{v_{h}^{l}}_{DG}.
\end{align*}
So by Lemma \ref{Gamma2GammahSmall} and the stability estimate (\ref{eq:stabilityEstimateGamma2}) we have
\[|E_{h}(v_{h}^{l})| \leq C h^{2}\norm{f}_{L^{2}(\Gamma)}\norm{v_{h}^{l}}_{DG}\]
for every $v_{h}^{l} \in V_{h}^{l}$ as required.
\end{proof}

\section{Numerical Tests} \label{sec:numerics}
\subsection{Implementation Aspects} \label{sec:implaspect}
The IP method has been implemented using DUNE-FEM, a discretization module
based on the Distributed and Unified Numerics Environment (DUNE), 
(further information about DUNE can be found 
in  \cite{dunegridpaperI:08}, \cite{dunegridpaperII:08} and \cite{dune-web-page}). 
DG methods are well tested 
for the DUNE-FEM module, as shown in \cite{dunefempaper:10},
\cite{cdg2:10}, but only simple schemes have been tested for surface PDEs 
(further information about the DUNE-FEM module can be found in \cite{dunefempaper:10} and 
\cite{dunefem-web-page}). 
In all our numerical tests we choose 
the polynomial order on each element $K_{h} \in \mathcal{T}_{h}$ to be $1$
and interior penalty parameters to satisfy (\ref{penaltyParameterBound}). 
The initial mesh generation for each test case is performed using the 3D surface mesh generation module of the Computational Geometry Algorithms Library (CGAL) (see \cite{rineau20093d}).

When performing mesh refinements it is often the case that there is no explicit projection map for mapping newly created nodes from $\Gamma_{h}$ to $\Gamma$, hence $\xi(x)$ must be approximated. Two different algorithms, discussed in more detail in \cite{demlow2008adaptive}, have been tested for such problems: one being Newton's method and the other being an ad-hoc first-order method. Assume that $x_{0} \in U$ and that we wish to compute $\xi(x_{0})$. The Newton method seeks to find a stationary point of the function $F(x,\lambda) = |x-x_{0}|^{2} + \lambda \phi(x)$ with starting values chosen to be $(x_{0},\lambda_{0}) = (x_{0},2\phi(x_{0})/|\nabla \phi(x_{0})|^{2})$, where $\phi$ is the level-set function of $\Gamma$ (and not necessarily a signed-distance function). We iterate the method until
\begin{equation}\label{eq:stoppingCriterion}
\left(\frac{\phi(x)^2}{|\nabla \phi(x)|^{2}}+\left|\frac{\nabla \phi(x)}{|\nabla \phi(x)|} - \frac{x-x_{0}}{|x-x_{0}|}\right|^{2}\right)^{1/2} < tol
\end{equation}
is reached. Note that this stopping criteria incorporates both how close the iterate is to the surface $\Gamma$ as well as how accurately it lies in the normal direction. The second method is given by the following first-order algorithm:

1. Stipulate $tol$ and $x_{0}$ and initialise $x=x_{0}$.

2. While (\ref{eq:stoppingCriterion}) is not satisfied, iterate the following steps:

\hspace{3mm} (a) Calculate $\tilde{x} = x - \frac{\phi(x)\nabla \phi(x)}{|\nabla \phi(x)|^{2}}$ \ \mbox{and}\ $dist = sign(\phi(x_{0}))|\tilde{x} - x_{0}|$.

\hspace{3mm} (b) Set $x = x_{0} - dist \frac{\nabla \phi(\tilde{x})}{|\nabla \phi(\tilde{x})|}$.

We can make this algorithm more flexible by only requiring a second order finite difference approximation of $\nabla \phi(x)$. It was observed in \cite{demlow2008adaptive} that in practice the second of the two algorithms was more efficient due to the fact that each step of Newton's method is relatively expensive. This was observed in all of our numerical tests. It has also been noted that there has not been any rigorous error analysis done for either of the two algorithms with respect to the stopping criterion (\ref{eq:stoppingCriterion}). Though we do not provide any such error analysis, our numerical tests suggest that both algorithms may stagnate at certain points for which the stopping criterion tolerance is never reached, even with fairly refined initial meshes. The normal direction error contribution of the stopping criteria appears to be responsible for the algorithm stagnating, hence for such points we remove this contribution so that the algorithm terminates and the resulting point lies on the surface nevertheless (albeit not necessarily at $\xi(x_{0})$).

In addition, we make use of this algorithm to provide a generic implementation of test problems on surfaces. Computing the Laplace-Beltrami operator of some given function over an arbitrary compact smooth connected and oriented surface given by the zero level-set of some function is tedious and requires changing the implementation for every such surface. In particular, we would need to explicitly compute the outward unit normal of the surface and its gradient whenever we consider a new surface. For any $u \in C^{2}(\mathbb{R}^{3})$, we have

\begin{equation}\label{eq:LaplaceBeltramiExpression}
\Delta_{\Gamma} u = \Delta u - \nu \cdot \nabla^{2}u \nu - \mbox{tr}(\nabla \nu) \nabla u \cdot \nu
\end{equation}
where $\Delta$ is the usual Euclidean Laplace operator in $\mathbb{R}^{3}$, $\nabla^{2}u \in \mathbb{R}^{3 \times 3}$ the (Euclidean) Hessian of u, $\nabla u$ the (Euclidean) gradient of $u$ and finally $\mbox{tr}(\nabla \nu)$ the trace of $\nabla \nu$ where $\nabla \nu \in \mathbb{R}^{3 \times 3}$ whose entries are the (Euclidean) partial derivatives of each component of the normal. We can make use of the ad-hoc first-order algorithm described previously to approximate the outward unit normal $\nu$ of $\Gamma$ in (\ref{eq:LaplaceBeltramiExpression}): this is done by computing $\nu(\xi(x_{0})) \approx sign(\phi(x_{0}))(\tilde{\xi}(x_{0}) - x_{0})$ where $\tilde{\xi}(x_{0})$ is the approximation of $\xi(x_{0})$ resulting from the algorithm . We may also approximate the (diagonal) entries of $\nabla \nu$ via second-order finite difference approximations as done for the approximation of $\nabla \phi$ in the first-order algorithm. Such a generic implementation has the benefit of only requiring input of the level-set function for the surface and nothing more, significantly facilitating numerical tests. The error caused by our approximation of the Laplace-Beltrami operator appears not to affect the resulting convergence order for any of our test cases.

\subsection{Approximation of Surface Conormals}
Consider the IP bilinear form $\tilde{a}_{\Gamma_{h}}^{IP}$ given by 
\begin{align}
\label{eq:InteriorPenaltyGammahFormNumerics}
\tilde{a}_{\Gamma_{h}}^{IP}(u_{h},v_{h}) &:= \sum_{K_{h} \in T_{h}}\int_{K_{h}}\nabla_{\Gamma_{h}}u_{h}\cdot \nabla_{\Gamma_{h}}v_{h} + u_{h} v_{h}\ dA_{h}\notag \\ 
- \sum_{e_{h} \in \mathcal{E}_{h}}\int_{e_{h}}&(u_{h}^{+}-u_{h}^{-})\frac{1}{2}(\nabla_{\Gamma_{h}}v_{h}^{+}\cdot n^{+}_{e_{h}} - \nabla_{\Gamma_{h}}v_{h}^{-}\cdot n^{-}_{e_{h}}) + (v_{h}^{+}-v_{h}^{-})\frac{1}{2}(\nabla_{\Gamma_{h}}u_{h}^{+}\cdot n^{+}_{e_{h}} - \nabla_{\Gamma_{h}}u_{h}^{-}\cdot n^{-}_{e_{h}})\ ds_{h}\notag \\
+ \sum_{e_{h} \in \mathcal{E}_{h}}\int_{e_{h}}&\beta_{e_{h}}(u_{h}^{+}-u_{h}^{-})(v_{h}^{+}-v_{h}^{-})\ ds_{h}
\end{align}
where $n_{e_{h}}^{+}$ and $n_{e_{h}}^{-}$ are simply vectors which lie on the intersection $e_{h} \in \mathcal{E}_{h}$ of neighbouring elements $K_{h}^{+}$ and $K_{h}^{-}$. 
Now assume that we want to assemble the system matrix on an element $K_{h}$ and we assume that $K_{h}=K_{h}^-$ for all $e_h\subset \partial K_h$. 
To this end, we fix $v_{h} = \varphi^{-}$ with $\mbox{supp}(\varphi^{-}) = K_{h}$ 
which leads to
\begin{align*}
&\tilde{a}_{\Gamma_{h}}^{IP}(u_{h},\varphi^{-}) := \int_{K_{h}}\nabla_{\Gamma_{h}}u_{h}\cdot \nabla_{\Gamma_{h}}\varphi^{-} + u_{h} \varphi^{-}\ dA_{h} \\ 
&+ \sum_{e_{h} \subset \partial K_{h}}\int_{e_{h}}(u_{h}^{+}-u_{h}^{-})\frac{1}{2}\nabla_{\Gamma_{h}}\varphi^{-}\cdot n^{-}_{e_{h}} + \varphi^{-}\frac{1}{2}(\nabla_{\Gamma_{h}}u_{h}^{+}\cdot n^{+}_{e_{h}} - \nabla_{\Gamma_{h}}u_{h}^{-}\cdot n^{-}_{e_{h}})\ ds_{h} \\
&- \sum_{e_{h} \subset \partial K_{h}}\int_{e_{h}}\beta_{e_{h}}(u_{h}^{+}-u_{h}^{-})\varphi^{-}\ ds_{h}.
\end{align*}
To assemble the block on the diagonal of the matrix we need to take
$u_h=\psi^-$ with $\mbox{supp}(\psi^-) = K_{h}$. For the off-diagonal block
we take $u_h=\psi^+$ with $\mbox{supp}(\psi^+) = K_{h}^{+}$ for one neighbour $K_{h}^{+}$ of $K_h$. 
We will then discuss different choices for $n_{e_{h}}^{+/-}$ which are linked to several intuitive ways of approximating respectively the surface conormals $n^{+/-}$. We use one choice
for $n^{+}_{e_{h}}$ in both cases. To cover all of the
choices we want to consider, it is necessary to use different choices for $n^{-}_{e_{h}}$, i.e., the vector belonging to the element
$K_{h}$ on which we are assembling the matrix. For the diagonal block we
will denote our choice for this vector with $n_D^{-}$ and use the original
notation $n^-_{e_{h}}$ for the choice used to assemble the off-diagonal
block.
Note that $n_{D}^{-} = n_{h}^{-}$ for all of the choices discussed below
except for Choice 3. 

Now consider $u_h=\psi^-$ with $\mbox{supp}(\psi^-) = K_{h}$ in
\eqref{eq:InteriorPenaltyGammahFormNumerics} using $n_D^{-}$ instead of
$n^-_{e_{h}}$:
\begin{align*}
&\tilde{a}_{\Gamma_{h}}^{IP}(\psi^{-},\varphi^{-}) :=
\int_{K_{h}}\nabla_{\Gamma_{h}}\psi^{-}\cdot \nabla_{\Gamma_{h}}\varphi^{-}
+ \psi^{-} \varphi^{-}\ dA_{h} \\ 
&- \sum_{e_{h} \subset \partial K_{h}}\int_{e_{h}}\frac{1}{2}\psi^{-}\nabla_{\Gamma_{h}}\varphi^{-}\cdot n^{-}_{D} + \varphi^{-}\frac{1}{2} \nabla_{\Gamma_{h}}\psi^{-}\cdot n^{-}_{D} - \beta_{e_{h}}\psi^{-}\varphi^{-}\ ds_{h}.
\end{align*}
Next we take $u_h=\psi^+$ with
$\mbox{supp}(\psi^+) = K_{h}^{+}$ for one neighbour $K_{h}^{+}$ of $K_h$, we now have
\begin{align*}
\tilde{a}_{\Gamma_{h}}^{IP}(\psi^{+},\varphi^{-}) := &\sum_{e_{h} \subset \partial K_{h}}\int_{e_{h}}\frac{1}{2}\psi^{+}\nabla_{\Gamma_{h}}\varphi^{-}\cdot n^{-}_{e_{h}}\ ds_{h} + \varphi^{-}\frac{1}{2} \nabla_{\Gamma_{h}}\psi^{+}\cdot n^{+}_{e_{h}}\ ds_{h} - \beta_{e_{h}}\psi^{+}\varphi^{-}\ ds_{h}.
\end{align*}
We can now prescribe choices for the vectors $n_{D}^{-}$, $n_{e_{h}}^{-}$, $n_{e_{h}}^{+}$  and will later investigate the behaviour of the numerical scheme (\ref{eq:InteriorPenaltyGammahFormNumerics}) for different choices of these three vectors.

\textbf{Choice 1}
\[n_{D}^{-} = n_{h}^{-}\   \ , \   \ n_{e_{h}}^{-} = n_{h}^{-}\   \ , \   \ n_{e_{h}}^{+} = -n_{h}^{-}.\]
Such a choice corresponds to using the IP method in a planar setting, for which $n_{h}^{+} = -n_{h}^{-}$, and is the simplest scheme to implement.

\textbf{Choice 2}
\[n_{D}^{-} = n_{h}^{-}\   \ , \   \ n_{e_{h}}^{-} = n_{h}^{-}\   \ , \   \ n_{e_{h}}^{+} = n_{h}^{+}.\]
This choice yields the numerical scheme (\ref{eq:InteriorPenaltyGammahForm}) that has been discussed up to now and used in the error analysis.

\textbf{Choice 3}
\[n_{D}^{-} = \frac{\frac{1}{2}(n_{h}^{-}-n_{h}^{+})}{|\frac{1}{2}(n_{h}^{-}-n_{h}^{+})|}\   \ , \   \ n_{e_{h}}^{-} = \frac{\frac{1}{2}(n_{h}^{-}-n_{h}^{+})}{|\frac{1}{2}(n_{h}^{-}-n_{h}^{+})|}\   \ , \   \ n_{e_{h}}^{+} = \frac{\frac{1}{2}(n_{h}^{+}-n_{h}^{-})}{|\frac{1}{2}(n_{h}^{+}-n_{h}^{-})|}.\]
This choice corresponds to prescribing the vectors to be the average of the two conormals and yields additional symmetry in the resulting matrix due to the fact that the vectors are now independent of the element on which they are computed.

\textbf{Choice 4}
\[n_{D}^{-} = n_{h}^{-}\   \ , \   \ n_{e_{h}}^{-} = -n_{h}^{+}\   \ , \   \ n_{e_{h}}^{+} = -n_{h}^{-}.\]
This particular choice corresponds to using the  formulation of the IP method found for example in~\cite{arnold2002unified} on $\Gamma_{h}$, but with a \emph{modified} penalty term that does not depend on the conormals i.e.
\begin{align*} 
\tilde{a}_{\Gamma_{h}}^{IP}(u_{h},v_{h}) &= \sum_{K_{h} \in \mathcal{T}_{h}}\int_{K_{h}}\nabla_{\Gamma_{h}}u_{h}\cdot \nabla_{\Gamma_{h}}v_{h} + u_{h} v_{h}\ dA_{h} \\ 
&- \sum_{e_{h}\subset \partial K_{h}}\int_{e_{h}}(u_{h}^{+}n_{h}^{+} + u_{h}^{-}n_{h}^{-})\cdot \frac{1}{2}(\nabla_{\Gamma_{h}}v_{h}^{+} + \nabla_{\Gamma_{h}}v_{h}^{-}) + \frac{1}{2}(\nabla_{\Gamma_{h}}u_{h}^{+}+\nabla_{\Gamma_{h}}u_{h}^{-})\cdot (v_{h}^{+}n_{h}^{+} + v_{h}^{-}n_{h}^{-})\ ds_{h} \\
&+ \sum_{e_{h}\subset \partial K_{h}}\int_{e_{h}}\beta_{e_{h}}(u_{h}^{+}-u_{h}^{-})(v_{h}^{+}-v_{h}^{-})\ ds_{h}\ \ \mbox{(modified penalty term)}.
\end{align*}
We summarise the choices in Table \ref{tab:normalChoiceTable}.
\begin{table}[h!]
\begin{center}
\small
\begin{tabular}{|c|c|c|c|c|}
\hline
\mbox{Choice}&$n_{D}^{-}$&$n_{e_{h}}^{-}$&$n_{e_{h}}^{+}$&\mbox{Description}\\
\hline
1&$n_{h}^{-}$&$n_{h}^{-}$&$-n_{h}^{-}$&\mbox{Planar (non-sym)}\\
2&$n_{h}^{-}$&$n_{h}^{-}$&$n_{h}^{+}$&\mbox{Analysis (sym pos-def)}\\
3&$\frac{\frac{1}{2}(n_{h}^{-}-n_{h}^{+})}{|\frac{1}{2}(n_{h}^{-}-n_{h}^{+})|}$&$\frac{\frac{1}{2}(n_{h}^{-}-n_{h}^{+})}{|\frac{1}{2}(n_{h}^{-}-n_{h}^{+})|}$&$\frac{\frac{1}{2}(n_{h}^{+}-n_{h}^{-})}{|\frac{1}{2}(n_{h}^{+}-n_{h}^{-})|}$&\mbox{Average (sym pos-def)}\\
4&$n_{h}^{-}$&$-n_{h}^{+}$&$-n_{h}^{-}$&\mbox{\cite{arnold2002unified} (sym pos-def)}\\
\hline
\end{tabular}
\end{center}
\caption{Choices of $n_{D}^{-}$, $n_{e_{h}}^{+}$ and $n_{e_{h}}^{-}$, description of the numerical schemes they respectively lead to\\ and properties of resulting matrix.}
\label{tab:normalChoiceTable}
\end{table}

We also consider the \cite{arnold2002unified} formulation with its true penalty term given by
\[\sum_{e_{h}\subset \partial K_{h}}\int_{e_{h}}\beta_{e_{h}}(u_{h}^{+}n_{h}^{+} + u_{h}^{-}n_{h}^{-}) \cdot (v_{h}^{+}n_{h}^{+} + v_{h}^{-}n_{h}^{-})\ ds_{h}\ \ \mbox{(true penalty term)}.\]
Choosing $v_{h} = \varphi^{-}$ and $u_{h} = \psi^{-}$ as before yields
\[\sum_{e_{h}\subset \partial K_{h}}\int_{e_{h}}\beta_{e_{h}}\psi^{-}\varphi^{-}\ ds_{h}.\]
For $u_{h} = \psi^{+}$ we now have,
\[\sum_{e_{h}\subset \partial K_{h}}\int_{e_{h}}\beta_{e_{h}}\psi^{+}\varphi^{-} (n_{h}^{+} \cdot n_{h}^{-})\ ds_{h}.\]

The matrices arising from Choices 2-4 are symmetric positive definite, so the Conjugate Gradient (CG) method is particularly well suited for such matrix problems. Choice 1 however yields a non-symmetric matrix, for which we use the Biconjugate Gradient Stabilized (BICGSTAB) method. All of these solvers make use of the algebraic multigrid algorithm (AMG) preconditioner coupled with the incomplete-LU factorisation preconditioner to speed up the solvers. Information on the implementation of these solvers and preconditioners in DUNE can be found in \cite{ISTL} and on their parallelisation in \cite{ISTLParallel}. 

We first tested our code on a sphere where the projection algorithm
described in Section \ref{sec:implaspect} is not required. The results showed that the expected
convergence rates and the choices of $n_{D}^{-}$, $n_{e_{h}}^{-}$ and $n_{e_{h}}^{+}$ had little influence on
the results. Hence we have decided not to include these tests since no
insight can be gained.

\subsection{Test Problem on Dziuk Surface}
The first test problem, taken from \cite{dziuk1988finite}, considers
\begin{equation}\label{eq:testProblem} 
-\Delta_{\Gamma} u + u = f
\end{equation}
on the surface $\Gamma = \{ x \in \mathbb{R}^{3}\ : \ (x_{1}-x_{3}^{2})^{2}+x_{2}^{2}+x_{3}^{2}= 1 \}$ whose exact solution is chosen to be given by $u(x)=x_{1}x_{2}$. The outward unit normal to this surface is given by $\nu(x) = (x_{1} - x_{3}^{2},x_{2},x_{3}(1 - 2(x_{1} - x_{3}^{2}))) / (1 + 4x_{3}^{2}(1-x_{1}-x_{2}^{2}))^{1/2}$. There is no explicit projection map for mapping newly created nodes to $\Gamma$ so $\xi(x)$ has to be approximated via the ad-hoc first order algorithm described in Section \ref{sec:implaspect}.
\begin{table}[h!]
\begin{center}
\begin{tabular}{|c|c|c|c|c|c|}
\hline
\mbox{Elements}&$h$&$L_{2} \mbox{-error}$&$L_{2}\mbox{-eoc}$&$DG\mbox{-error}$&$DG\mbox{-eoc}$\\
\hline
92&0.704521&0.243493&&0.894504&\\
368&0.353599&0.0842372&1.53&0.490805&0.87\\
1472&0.176993&0.0268596&1.65&0.263808&0.90\\
5888&0.0885231&0.00637826&2.07&0.135162&0.97\\
23552&0.0442651&0.00171047&1.90&0.0685366&0.98\\
94208&0.022133&0.000416366&2.04&0.0343677&1.00\\
376832&0.0110666&0.000104274&2.00&0.0171891&1.00\\
1507328&0.0055333&2.60734e-05&2.00&0.0085935&1.00\\
\hline
\end{tabular}
\end{center}
\caption{Errors and convergence orders for (\ref{eq:testProblem}) on the Dziuk surface with Choice 2 (analysis).}
\label{tab:ConformGridTableDziuk}
\end{table}
  
Table \ref{tab:ConformGridTableDziuk} shows the $L^{2}$ and $DG$ errors for Choice 2. As expected, the experimental orders of convergence (EOCs) match up well with the theoretical convergence rates. Figure \ref{fig:ConformGridFigure1Dziuk} shows the resulting DG approximation to (\ref{eq:testProblem}) on the Dziuk surface using Choice 2.
\begin{figure}[htp]
\centering
\subfloat[]{\includegraphics[width=0.499 \textwidth]{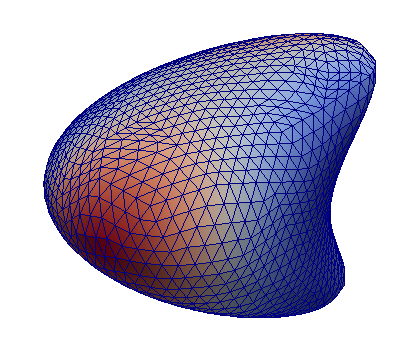}}\\
\caption{DG approximation of (\ref{eq:testProblem}) on the Dziuk surface with Choice 2 (analysis).}
\label{fig:ConformGridFigure1Dziuk}
\subfloat[]{\includegraphics[width=0.499 \textwidth]{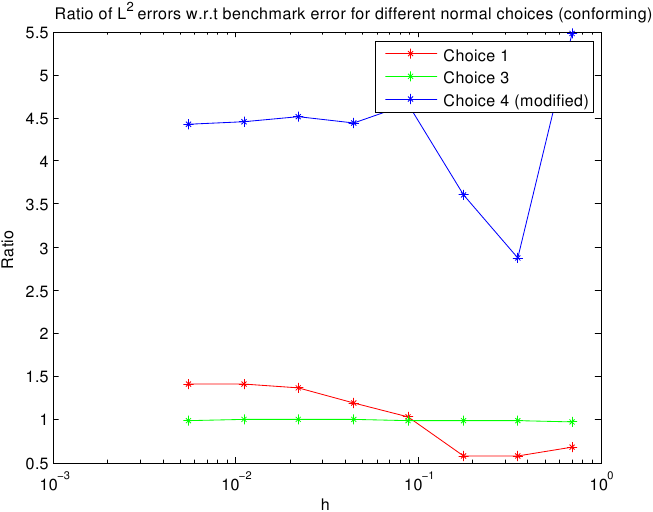}}
\subfloat[]{\includegraphics[width=0.482 \textwidth]{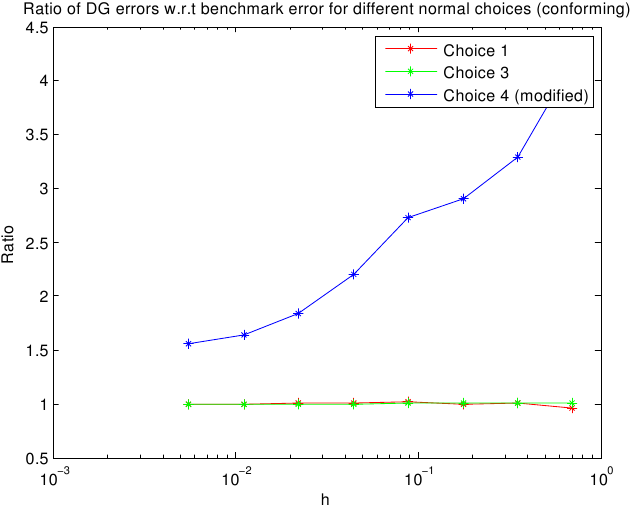}}
\caption{Ratio of respectively $L^{2}$ and $DG$ errors for (\ref{eq:testProblem}) on the Dziuk surface with respect to\\ the analysis error (Choice 2) for Choices 1, 3 and 4.}
\label{fig:ConformGridFigure2Dziuk}
\end{figure}

Figure \ref{fig:ConformGridFigure2Dziuk}(a,b) shows respectively the
ratios of the $L^{2}$ and $DG$ errors
$\frac{\mbox{Err}_{i}}{\mbox{Err}_{2}}$ with $i = 1,3,4$ where $\mbox{Err}_{i}$ denotes the error in the corresponding norm when using Choice $i$. Choices 2 (analysis) and 3 (average) appear to give the best results in
both the $L^{2}$ and $DG$ norms. In particular, the additional symmetry induced by using Choice 3 which we mentioned previously makes it the preferable choice.

A few remarks on Choice 4 with the \emph{true} penalty term which, 
as mentioned before, would correspond to the \cite{arnold2002unified} IP method on $\Gamma_{h}$: 
interestingly, the scheme fails to converge for such a choice. 
The numerical scheme appears to be particularly sensitive to small perturbations in the off-diagonal 
entries of the resulting matrix, namely the ones caused by the product of the conormals 
$n_{h}^{+} \cdot n_{h}^{-}$ when using the true penalty term for Choice 4. 
Note that in the flat case, $n_{h}^{+} \cdot n_{h}^{-}$ is equal to
$-1$. We tried to reproduce this problem in the flat case, taking two
different values for the penalty parameter on $e_h$ depending on whether we are
assembling the diagonal or the off-diagonal block. Already a factor of
$10^{-5}$ leads to similar problems with stability.
Since Choice 4 with or without the true penalty term was always less
accurate than the other choices, we omit this choice in our next test case.

\subsection{Test Problem on Enzensberger-Stern Surface}
Our next test problem considers (\ref{eq:testProblem}) on  $\Gamma = \{ x \in \mathbb{R}^{3}\ : \ 400(x^2y^2 + y^2z^2 + x^2z^2) - (1-x^2-y^2-z^2)^3 - 40 = 0 \}$ whose exact solution is again chosen to be given by $u(x)=x_{1}x_{2}$. As for the previous test problem, there is no explicit projection map so we make use of the first order ad-hoc algorithm. In this test problem, the computation of $\Delta_{\Gamma} u$ to derive the right-hand side of  (\ref{eq:testProblem}) is done via our approximation of the Laplace-Beltrami operator described in Section \ref{sec:implaspect}.
\begin{table}[h!]
\begin{center}
\begin{tabular}{|c|c|c|c|c|c|}
\hline
\mbox{Elements}&$h$&$L_{2} \mbox{-error}$&$L_{2}\mbox{-eoc}$&$DG\mbox{-error}$&$DG\mbox{-eoc}$\\
\hline
2358&0.163789&0.476777&&0.998066&\\
9432&0.0817973&0.175293&1.44&0.472241&1.08\\
37728&0.040885&0.0160606&3.45&0.150144&1.65\\
150912&0.0204411&0.00139698&3.52&0.0703901&1.09\\
603648&0.0102204&0.00033846&2.04&0.03473453&1.02\\
2414592&0.00511&7.86713e-05&2.10&0.0172348&1.01\\
\hline
\end{tabular}
\end{center}
\caption{Errors and convergence orders for (\ref{eq:testProblem}) on the Enzensberger-Stern surface with Choice 2 \\ (analysis).}
\label{tab:ConformGridTableEnzensburgerStern}
\end{table}

Table \ref{tab:ConformGridTableEnzensburgerStern} shows the $L^{2}$ and
$DG$ errors for Choice 2.
Although the EOCs are more erratic than for the previous test
problem, largely due to our approximation of the Laplace-Beltrami operator,
they nevertheless match up well with theoretical convergence rates. Figure
\ref{fig:ConformGridFigure1EnzensburgerStern} shows the resulting DG
approximation to (\ref{eq:testProblem}) on this surface using Choice 2.
\begin{figure}[htp]
\centering
\subfloat[]{\includegraphics[width=0.499 \textwidth]{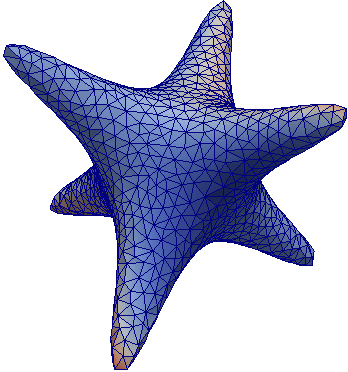}}\\
\caption{DG approximation of (\ref{eq:testProblem}) on the Enzensberger-Stern surface with Choice 2 (analysis).}
\label{fig:ConformGridFigure1EnzensburgerStern}
\subfloat[]{\includegraphics[width=0.499 \textwidth]{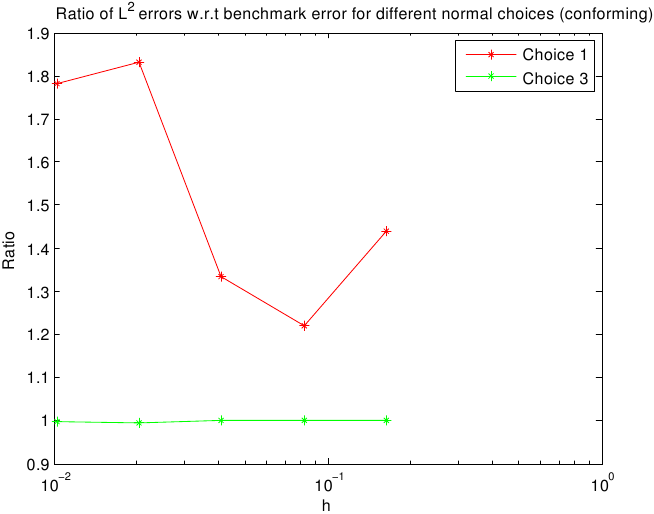}}
\subfloat[]{\includegraphics[width=0.49 \textwidth]{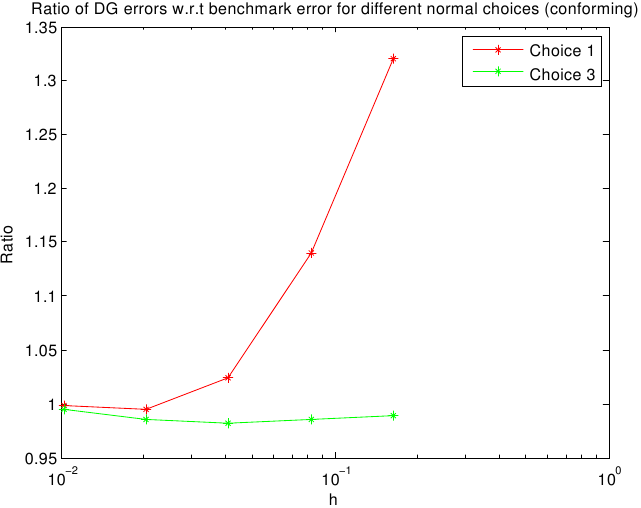}}
\caption{Ratio of respectively $L^{2}$ and $DG$ errors for (\ref{eq:testProblem}) on the Enzensberger-Stern surface with\\ respect to the analysis error (Choice 2) for Choices 1 and 3.}
\label{fig:ConformGridFigure2EnzensburgerStern}
\end{figure}
We again consider the DG approximation of (\ref{eq:testProblem}) for different choices of $n_{D}^{-}$, $n_{e_{h}}^{+}$ and $n_{e_{h}}^{-}$. Figure \ref{fig:ConformGridFigure2EnzensburgerStern}(a,b) shows respectively the ratios of the $L^{2}$ and $DG$ errors. These results confirm that Choices 2 and 3 are the preferable ones to use for DG schemes on surfaces.

\section{Extensions}
Although our analysis was restricted to conforming grids due to the nature of the surface approximation, our numerical tests suggest that the estimates of Theorem \ref{aprioriErrorEstimateIP} also hold for nonconforming grids as shown in Table \ref{tab:NonConformGridTableDziuk} for the Dziuk surface. 
\begin{table}[htp]
\begin{center}
\begin{tabular}{|c|c|c|c|c|c|}
\hline
\mbox{Elements}&$h$&$L_{2} \mbox{-error}$&$L_{2}\mbox{-eoc}$&$DG\mbox{-error}$&$DG\mbox{-eoc}$\\
\hline
230&0.353599&0.21889& &0.777436&\\
920&0.176993&0.0530078&2.05&0.413817&0.91\\
3680&0.0885231&0.0281113&0.92&0.223119&0.89\\
14720&0.0442651&0.00442299&2.67&0.111518&1.00\\
58880&0.022133&0.00104207&2.08&0.0562128&0.99\\
235520&0.0110666&0.00026444&1.99&0.0281247&1.00\\
942080&0.00553329&6.60383e-05&2.00&0.0140544&1.00\\
\hline
\end{tabular}
\end{center}
\caption{Errors and convergence orders for (\ref{eq:testProblem}) on the Dziuk surface with Choice 2 (analysis) for\\ a nonconforming grid.}
\label{tab:NonConformGridTableDziuk}
\end{table}
Future work aims to derive a-priori error estimates for nonconforming grids. 
 
\cite{demlow2009higher} has proven that in particular, for a linear approximation of the surface and quadratic polynomial basis functions, the FEM error scales quadratically in both the $L^{2}$ and $H^{1}$ norms. Numerical tests suggest that our DG scheme scales similarly in the $L^{2}$ and $DG$ norms as shown in Table \ref{tab:ConformGridTableDziukQuadratic} for the Dziuk surface. In future work we aim to derive higher order a-priori error estimates (that is, both higher order polynomial basis functions and higher order approximations of the surface) for the DG approximation in a similar fashion to the work done in \cite{demlow2009higher}.       
\begin{table}[htp]
\begin{center}
\begin{tabular}{|c|c|c|c|c|c|}
\hline
\mbox{Elements}&$h$&$L_{2} \mbox{-error}$&$L_{2}\mbox{-eoc}$&$DG\mbox{-error}$&$DG\mbox{-eoc}$\\
\hline
92&0.704521&0.136442&&0.322416&\\
368&0.353599&0.0551454&1.31&0.150303&1.10\\
1472&0.176993&0.0215041&1.36&0.0601722&1.32\\
5888&0.0885231&0.00448861&2.26&0.0182412&1.72\\
23552&0.0442651&0.00120287&1.90&0.00513161&1.83\\
94208&0.022133&0.00029651&2.02&0.00130482&1.98\\
376832&0.0110666&7.41044e-05&2.00&0.00032728&2.00\\
\hline
\end{tabular}
\end{center}
\caption{Errors and convergence orders for (\ref{eq:testProblem}) on the Dziuk surface with Choice 3 (average)\\ using quadratic polynomial basis functions.}
\label{tab:ConformGridTableDziukQuadratic}
\end{table}

\section*{Acknowledgements}
This research has been supported by the British Engineering and Physical Sciences Research Council (EPSRC), Grant EP/H023364/1.

\bibliographystyle{IMANUM-BIB}
\bibliography{IMANUM-refs}

\end{document}